\newcolumntype{P}[1]{>{\centering\arraybackslash}p{#1}}\newcolumntype{M}[1]{>{\centering\arraybackslash}m{#1}}
\newtheorem{theorem}{Theorem}[section]
\newtheorem{corollary}[theorem]{Corollary}
\newtheorem{lemma}[theorem]{Lemma}
\newtheorem{proposition}[theorem]{Proposition}
\newtheorem{claim}{Claim}
\theoremstyle{definition}
\newtheorem{definition}[theorem]{Definition}
\newtheorem{assumption}[theorem]{Assumption}
\theoremstyle{remark}
\newtheorem*{remark}{Remark}
\newcommand{\R}{\mathbb{R}}
\newcommand{\Z}{\mathbb{Z}}
\newcommand{\Prob}{\mathbb{P}}
\newcommand{\Proba}[1]{ \Prob\left( #1 \right)}
\newcommand{\norm}[2]{\left\Vert #1 \right\Vert_{#2}}
\newcommand{\scal}[2]{\left<#1,#2\right>}
\newcommand{\chem}{\text{chem}}
\title{Fractal behavior for nodal lines of smooth planar Gaussian fields at criticality}
\author{David Vernotte}	
\begin{document}

\maketitle

\begin{abstract}
This paper is devoted to the study of the large scale geometry of the excursion set and nodal set of a planar smooth Gaussian field at criticality $\ell=\ell_c=0$. We prove that there exists $s_1>1$ such that with high probability, macroscopic nodal lines in a box of size $\lambda$ are of length at least $\lambda^{s_1}$. As an application, on the event that a box is crossed by a nodal line, then the shortest crossing is of length at least $\lambda^{s_1}$. We also prove that there exists $s_2<2$ such that with high probability, the shortest crossing is non degenerated, that is, its length is at most $\lambda^{s_2}$. The argument for the lower bound is based on a celebrated paper of Aizenman and Burchard \cite{AB98} that provides a general argument to show that random curves present a fractal behavior. For the upper bound, our proof relies on the polynomial decay of the probability of one-arm events which was proven in \cite{BG16}.
\end{abstract}

\tableofcontents

\section{General introduction}

In this paper, we study some geometric properties of the excursion and nodal sets of some smooth random Gaussian fields in the Euclidean space of dimension $2$. Gaussian percolation can be seen as the continuous analogue of classical discrete Bernoulli percolation. Instead of a random configuration $\omega : \Z^2 \to \{0,1\}$, one consider a random continuous (in fact $\mathcal{C}^1$) function called the \textit{random field}
\begin{equation}
    f : \R^2 \to \R.
\end{equation}
In the following the random field $f$ is a continuous, stationary Gaussian field, that is centered. That is we assume that:
\begin{itemize}
    \item for all $n\geq 1$, $x_1,\dots,x_n \in \R^2$ distinct points, the random vector $(f(x_1),\dots,f(x_n))$ is a non-degenerated Gaussian vector that is centered,
    \item for all $x\in \R^2$, $f(\cdot)$ and $f(\cdot+x)$ have the same law (stationarity),
    \item almost surely, the function $f : \R^2 \to \R$ is continuous.
\end{itemize}
\begin{remark}
 The third condition simply means that there exists an event of probability $1$ under which the function $f$ is continuous (we may then consider a modification of the underlying probability space to assume that each realisation of $f$ is continuous).
\end{remark}
When considering Gaussian fields, one object of interest is the \textit{covariance kernel} since it completely characterizes the law of the field.
\begin{definition}
Let $f$ be a centered Gaussian field on $\R^2$. The \textit{covariance kernel} associated to $f$ is the function
$$\begin{array}{ccccc}
    K & : & \R^2\times \R^2 & \to & \R  \\
     & & (x,y) & \mapsto & \mathbb{E}[f(x)f(y)].
\end{array}$$
\end{definition}
Since the fields we consider are stationary, their covariance kernels only depend on $x-y$, as such we can define the \textit{covariance function}.
\begin{definition}
Let $f$ be a centered, stationary Gaussian field on $\R^2$. The \textit{covarariance function} associated to $f$ is the function
$$\begin{array}{ccccc}
    \kappa & : & \R^2 & \to &\R \\
    & & x&\mapsto & K(x,0) = \mathbb{E}[f(x)f(0)].
\end{array}$$
\end{definition}
This covariance function is enough to characterize the law of $f$ since we have $K(x,y)=\kappa(x-y).$
An important example of a continuous stationary centered Gaussian field is the Bargmann-Fock field. Given a collection $(a_{i,j})_{i,j\geq 0}$ of independent identically distributed standard Gaussian random variables, we define
$$\forall x \in \R^2,\ f(x):= e^{-\frac{1}{2}\norm{x}{}^2}\sum_{i,j\geq 0}a_{i,j}\frac{x_1^ix_2^j}{\sqrt{i!j!}},$$
where $\norm{\cdot}{}$ denotes the Euclidean norm of $\R^2$.
Note that almost surely the above function is well defined for all $x\in \R^2$. Furthermore, the function $f$ obtained is almost surely analytic (in particular continuous). The field $f$ is clearly a Gaussian field and is stationary since its covariance kernel is given by $K(x,y) =\mathbb{E}[f(x)f(y)]=e^{-\frac{1}{2}\norm{x-y}{}^2}.$

We present a general way to define continuous stationary Gaussian fields. This is done via the white noise representation of such field.
\begin{definition}
 A white noise on $\R^2$ is a centered Gaussian field $W$ indexed by the functions of $L^2(\R^2)$ such that for any $\varphi_1,\varphi_2 \in L^2(\R^2)$ we have
\begin{equation}
    \mathbb{E}[W(\varphi_1)W(\varphi_2)]=\int_{\R^2}\varphi_1(x)\varphi_2(x)dx.
\end{equation}
 \end{definition}
The construction of the white noise can be made using an Hilbert basis of $L^2(\R^2)$ we refer the reader to \cite{Jan97} for more details.
Let $q\in L^2(\R^2)$ a function such that $\forall x\in \R^2, q(x)=q(-x).$ We define the stationary Gaussian field
\begin{equation}
    \label{eq:defqstarW}
    f:= q\ast W,
\end{equation}
where $\ast$ denotes convolution. That is for $x\in \R^2$ we set
$$f(x) := W(q(x-\cdot)).$$
It can be checked by a simple computation that the field $f$ obtained is a stationary and centered Gaussian field whose covariance function is given by $\kappa(x)=(q\ast q)(x).$ Moreover, under some regularity assumptions on $q$ (see Assumption \ref{a:a2}), the field $f$ obtained will be continuous.
\begin{remark}
 By setting $q(x) = \sqrt{\frac{2}{\pi}}e^{-\norm{x}{}^2}$, then the field $f=q\ast W$ obtained is the Bargmann-Fock field previously introduced.
\end{remark}
We have already mentioned that the function $q$ relates to the covariance function $\kappa$ via the equality $\kappa=q\ast q$. Another relation between $\kappa$ and $q$ can be seen via the spectral measure of the field. Assume that $\kappa$ is continuous, then by Bochner's theorem there exists a measure $\mu$ on $\R^2$ called the \textit{spectral measure} which is the Fourier transform of $\kappa$, that is, for all $x\in \R^2$,
\begin{equation}
    \kappa(x) = \int_{\R^2}e^{2i\pi\scal{x}{y}}\mu(dy),
\end{equation}
where $\scal{x}{y}$ denotes the Euclidean scalar product between $x$ and $y$. Assume also, that $\mu$ is absolutely continuous with respect to the Lebesgue measure on $\R^2$ and we denote by $\rho^2$ its density, which is called the \textit{spectral density}. We have
 $$\kappa(0)=\int_{\R^2}\mu(dy) = \int_{\R^2}\rho(y)^2dy.$$
 This shows that $\rho$ is in $L^2(\R^2)$. The existence of the spectral density is important as it implies that the law of $f$ (which is invariant by the translations) is ergodic (see \cite[Appendix B]{NS16}). Since $\rho$ is in $L^2(\R^2)$ one can consider its Fourier transform $\mathcal{F}(\rho)\in L^2(\R^2)$. It turns out that if one defines $q:= \mathcal{F}(\rho)$ then the field $q\ast W$ obtained has covariance function $\kappa$, giving a white noise decomposition of the field $f$. This stems from the fact that $\kappa =q\ast q$ and taking Fourier transform in this equality.
 
We now state some classical assumptions that we make on the function $q$.

\begin{assumption}[Symmetry]
\label{a:a1}
The function $q$ verifies the following.
\begin{itemize}
    \item $q$ is in $L^2(\R^2)$
    \item $q$ is invariant by rotation by $\frac{\pi}{2}$ about the origin and is invariant by sign change of the coordinates.
\end{itemize}
\end{assumption}

\begin{assumption}[Regularity, depends on $\alpha \in \mathbb{N}$]
\label{a:a2}
The function $q$ is in $\mathcal{C}^\alpha(\R^2)$. Moreover, for all $(\alpha_1,\alpha_2) \in \mathbb{N}^2$ with $\alpha_1+\alpha_2\leq \alpha$, the partial derivative $\partial^{\alpha_1,\alpha_2} q$ is in $L^2(\R^2).$
\end{assumption}
\begin{assumption}[Positivity of correlations]
\label{a:a3}
One of the following (to be specified) is verified
\begin{itemize}
    \item (Weak positivity) $\kappa = q\ast q\geq 0.$
    \item (Strong positivity) $q \geq 0.$
\end{itemize}
\end{assumption}
\begin{assumption}[Decay of correlations, depends on $\beta>0$]
\label{a:a4}
As $\norm{x}{}$ goes to infinity we have $$\max\left(q(x),\norm{\nabla q(x)}{}\right)=O\left(\norm{x}{}^{-\beta}\right).$$
\end{assumption}

We briefly comment on those assumptions, the reader may also refer to \cite{NS16}, \cite{MV20}, \cite{RV19} for other comments on these hypotheses. The regularity Assumption \ref{a:a2} is a technical assumption that makes the field $f=q\star W$ a continuous field (in fact $\mathcal{C}^2$ as soon as $m\geq 3$). Assumption \ref{a:a3} implies that the field possesses the FKG property. That is two increasing events (events that are favored by an increase of the values of the field) are positively correlated. Finally, the purpose of Assumption \ref{a:a4} is to replace the hypothesis of independence in classical Bernoulli percolation.
\begin{remark}
 Note that the Bargmann-Fock field satisfies Assumption \ref{a:a1}, \ref{a:a2} for all $\alpha \geq 0$, \ref{a:a3} (strong positivity) and \ref{a:a4} for all $\beta>0$. This is due to the explicit formula $q(x)=\sqrt{\frac{2}{\pi}}e^{-\norm{x}{}^2}$ which straightforwardly verifies all of the above hypotheses.
\end{remark}

We now introduce the percolation model associated to this random field. One introduce a continuous parameter called a \textit{level} which is usually denoted by $\ell \in \mathbb{R}$. We define the following random subsets of $\R^2$. 
\begin{definition}
The \textit{excursion set at level $\ell$} is denoted by $\mathcal{E}_\ell(f)$ and is defined as
\begin{equation}
    \mathcal{E}_\ell(f) := \{x\in \R^2\ |\ f(x)\geq -\ell\}.
\end{equation}
The \textit{nodal set at level $\ell$} is denoted by $\mathcal{Z}_\ell(f)$ and is defined as
\begin{equation}
    \mathcal{Z}_\ell(f) := \{x\in \R^2\ |\ f(x)=-\ell\}.
\end{equation}
\end{definition}
A main object of interest in percolation theory is the probability of the existence of \textit{crossings} in the sets $\mathcal{E}_\ell(f)$ and $\mathcal{Z}_\ell(f)$.
\begin{definition}
\label{def:cross}
Let $\mathcal{R}\subset \R^2$ be a non degenerated rectangle of length $a$ and height $b$. Let $A\subset \R^2$ be a subset. We say that $\mathcal{R}$ is crossed by $A$ in the length direction if there exists a connected component of $A\cap \mathcal{R}$ that intersects both sides of $\mathcal{R}$ of length $b$ (the small sides) and we denote $\text{Cross}_A(\mathcal{R})$ this event. In particular, $\text{Cross}_{\mathcal{E}_\ell(f)}(\mathcal{R})$ (resp $\text{Cross}_{\mathcal{Z}_\ell(f)}(\mathcal{R})$) denotes the event that $\mathcal{R}$ is crossed in the length direction by $\mathcal{E}_\ell(f)$ (resp $\mathcal{Z}_\ell(f)$). When $\mathcal{R}$ is a square, one should specify in which direction the crossing occurs.
\end{definition}
It is known that given a rectangle $\mathcal{R}$, the probability of the event $\text{Cross}_{\mathcal{E}_\ell(f)}(\mathcal{R})$ depends on $\ell$ and undergoes some sharp transition as $\ell$ crosses the so called critical parameter $\ell_c=0$.
\begin{theorem}[\cite{BG16}, \cite{BM18},\cite{RV19} for $\ell=0$, \cite{RV20},\cite{MV20} for $\ell>0$]
\label{thm:rsw}
Assume that $q$ satisfies Assumptions \ref{a:a1}, \ref{a:a2} for some $\alpha \geq 3$, \ref{a:a3} (weak positivity) and \ref{a:a4} for some $\beta>2$. Let $\mathcal{R}\subset \R^2$ be any non degenerated rectangle.
\begin{enumerate}
    \item If $\ell=0$ and $q$ satisfies Assumption \ref{a:a3} (weak positivity), then there exists a constant $c \in ]0,1[$ depending on $q$ and $\mathcal{R}$ such that
    \begin{equation}
    \label{eq:7}
    \forall \lambda>1,\ \Proba{\text{Cross}_{\mathcal{E}_0(f)}(\lambda \mathcal{R})}\in [c,1-c].
\end{equation}
\begin{equation}
\label{eq:8}
    \forall \lambda>1,\ \Proba{\text{Cross}_{\mathcal{Z}_0(f)}(\lambda \mathcal{R})}\in [c,1-c].
\end{equation}
\item If $\ell>0$ and $q$ satisfies Assumption \ref{a:a3} (strong positivity), then there exists a constant $c>0$ depending on $q$ and $\mathcal{R}$ such that
\begin{equation}
    \forall \lambda>1,\ \Proba{\text{Cross}_{\mathcal{E}_\ell(f)}(\lambda \mathcal{R})} > 1 -\frac{1}{c}e^{-c \lambda}.
\end{equation}
\begin{equation}
    \forall \lambda>1,\ \Proba{\text{Cross}_{\mathcal{Z}_\ell(f)}(\lambda \mathcal{R})}< \frac{1}{c}e^{-c \lambda}.
\end{equation}
\end{enumerate}
Here, $\lambda \mathcal{R}$ denotes the rectangle $\mathcal{R}$ dilated by $\lambda$.
\end{theorem}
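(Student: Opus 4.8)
Two structural inputs drive the argument, and I would set them up first. The first is the FKG (Harris) inequality for $f$: under Assumption~\ref{a:a3} (weak positivity, $\kappa=q\ast q\ge 0$), Pitt's theorem gives that any two increasing events for the field are positively correlated, an event being increasing when its indicator is a non-decreasing functional of $f$. The second is the exact planar duality of the excursion picture at level $0$: under Assumptions~\ref{a:a1}--\ref{a:a2} the level set $\mathcal{Z}_0(f)$ is almost surely a locally finite union of $\mathcal{C}^1$ arcs and loops, and in any rectangle this forces the dichotomy that $\mathcal{R}$ is crossed in the length direction by $\mathcal{E}_0(f)=\{f\ge 0\}$ if and only if it is \emph{not} crossed in the width direction by the closed set $\{f\le 0\}=\mathcal{E}_0(-f)$, up to an event of probability zero; the same dichotomy holds verbatim at any fixed level $-\ell$.

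For $\ell=0$ I would argue in four steps. \emph{(i) Squares.} Since $f$ is centred Gaussian, $f$ and $-f$ have the same law, and by Assumption~\ref{a:a1} this law is invariant under rotation by $\pi/2$; combining the two symmetries with the duality dichotomy gives $\mathbb{P}(\text{Cross}_{\mathcal{E}_0(f)}(\lambda Q))=1/2$ for every square $Q$. \emph{(ii) The RSW step.} The crux is to pass from crossings of squares to crossings of long rectangles with a \emph{scale-independent} lower bound. I would discretise $f$ on a mesh fine enough that the crossing events become, up to arbitrarily small error, increasing events depending on finitely many values of the field, and then run a Tassion-type argument: FKG, the square-root trick, and the gluing of crossings of overlapping sub-rectangles produce a uniform bound $c(\rho)>0$ for the probability that a $\rho\times 1$ rectangle is crossed in the length direction, the long-range dependence of $f$ being controlled by the polynomial decay of correlations (Assumption~\ref{a:a4}, $\beta>2$), which decouples the field across well-separated regions up to a summable error. \emph{(iii) All scales.} From $c(\rho)$ for a single aspect ratio, a standard chaining of horizontal and vertical crossings of overlapping rectangles of fixed shape, glued by FKG, yields the lower bound in \eqref{eq:7} for every fixed $\mathcal{R}$ and all $\lambda>1$. \emph{(iv) Upper bound and nodal lines.} The failure of $\text{Cross}_{\mathcal{E}_0(f)}(\lambda\mathcal{R})$ is, by the dichotomy, the event that $\mathcal{E}_0(-f)$ crosses the $\pi/2$-rotated rectangle in its length direction; applying (i)--(iii) to $-f$ bounds its probability below by a constant, so $\mathbb{P}(\text{Cross}_{\mathcal{E}_0(f)}(\lambda\mathcal{R}))\le 1-c$. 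For \eqref{eq:8}, the lower bound comes from the topological fact that if $\{f\ge 0\}$ and $\{f\le 0\}$ both cross $\lambda\mathcal{R}$ in the length direction then a component of $\mathcal{Z}_0(f)$ does too (one extracts a zero-level arc joining the two short sides between the two monochromatic crossings); placing a positive crossing and a negative crossing in two horizontal sub-strips separated by a buffer, and invoking (ii)--(iii) for $f$ and $-f$ together with the decoupling from Assumption~\ref{a:a4}, makes this event have probability at least some $c'>0$. The matching upper bound uses that a \emph{width}-direction crossing of $\lambda\mathcal{R}$ by $\{f>0\}$ is a curve separating the two short sides, and hence blocks every length-direction nodal crossing; by the dichotomy and the lower bound \eqref{eq:7} applied to $-f$, such a blocking crossing exists with probability bounded below, giving $\mathbb{P}(\text{Cross}_{\mathcal{Z}_0(f)}(\lambda\mathcal{R}))\le 1-c$.

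For $\ell>0$ I would deduce everything from a sharpness statement: for the centred field $-f$, the excursion set $\{-f\ge\ell\}=\{f\le -\ell\}$ above the positive level $\ell$ has crossing probabilities decaying exponentially in the scale. To prove this I would use a randomised-algorithm (OSSS-type) inequality in the spirit of Duminil-Copin--Raoufi--Tassion applied to a discretisation of $f$: combined with the criticality bounds \eqref{eq:7}, which pin the crossing probabilities away from $0$ and $1$ at $\ell_c=0$, it produces a differential inequality in the level parameter whose integration gives exponential decay strictly off criticality; this is where the strengthened Assumption~\ref{a:a3} (strong positivity $q\ge 0$, which permits local resampling decompositions of $f$) and Assumption~\ref{a:a4} with $\beta>2$ (to control the dependence and the attendant error sums) enter, possibly with an additional renormalisation step over dyadic scales. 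Granting this, $\text{Cross}_{\mathcal{E}_\ell(f)}(\lambda\mathcal{R})$ fails exactly when $\{f\le -\ell\}=\mathcal{E}_{-\ell}(-f)$ crosses $\lambda\mathcal{R}$ in the width direction, an event of probability at most $c^{-1}e^{-c\lambda}$; and a length-direction crossing of $\lambda\mathcal{R}$ by the nodal line $\mathcal{Z}_\ell(f)=\{f=-\ell\}$ forces $\{f\le -\ell\}$ to have a crossing inside $\lambda\mathcal{R}$, again of probability at most $c^{-1}e^{-c\lambda}$. This gives both displays in part~(2).

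The main obstacle is step (ii) and the sharp-threshold input for $\ell>0$: this is exactly where the continuous nature of the field bites, since one cannot invoke discrete planar percolation directly and must instead combine a careful discretisation of $f$, the FKG inequality, and the quasi-independence supplied by Assumption~\ref{a:a4} to substitute for the independence available in Bernoulli percolation. These are, respectively, the content of \cite{BG16,BM18,RV19} at criticality and of \cite{RV20,MV20} off criticality, and in practice I would import those arguments rather than reconstruct them.
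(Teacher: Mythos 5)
This theorem is quoted from the literature and the paper gives no proof of it: it is imported as a black box from the works cited in its header, with only the square-duality remark $\Prob(\text{Cross}_{\mathcal{E}_0(f)}(\mathcal{R}))=\tfrac12$ sketched in the surrounding discussion. Your outline correctly identifies the standard ingredients (symmetry and duality for squares, FKG plus a Tassion-type RSW step with quasi-independence from Assumption \ref{a:a4} at $\ell=0$, and an OSSS-type sharp-threshold argument for $\ell>0$) and correctly defers to the same references for the genuinely hard steps, so it matches how the result is actually established and how the paper uses it.
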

We make a few comments about Theorem \ref{thm:rsw}. First, this theorem can be understood as a quantitative description of the transition phase of our percolation model. When $\ell>0$, big rectangles are crossed with high probability by $\mathcal{E}_\ell(f)$, this high connectivity is a witness of the existence of a unique unbounded connected component in $\mathcal{E}_\ell(f)$ (see \cite{MV20} for more details). When $\ell=0$, we are in the so-called \textit{critical regime} which will be the focus of this article. The first item of Theorem \ref{thm:rsw} is sometimes referred to as a Russo-Seymour-Welsh property (RSW) or also as a box-crossing property. It can be interpreted as saying that the probability of having a positive (or negative) crossing in a rectangle remains bounded away from $0$ and $1$ as the rectangle becomes bigger and bigger. Note that this property together with some result of quasi-independence imply that with probability $1$, there is no unbounded component in $\mathcal{E}_0(f)$ (nor in $\mathcal{Z}_0(f)$) (see \cite{BG16} but also \cite{Ale96} for earlier results on this problem). We also briefly comment that if $\mathcal{R}$ is a square, then equation \eqref{eq:7} of Theorem \ref{thm:rsw} remains true without assuming that $q$ satisfies Assumption \ref{a:a3} (weak positivity). In fact, by duality (using the fact that $f$ and $-f$ have the same law) and by $\pi/2$ rotational invariance, one can readily show that we always have $\mathbb{P}(\text{Cross}_{\mathcal{E}_0(f)}(\mathcal{R}))=\frac{1}{2}.$ Moreover, since a crossing of a square by $\mathcal{Z}_0(f)$ implies a crossing of the same rectangle by $\mathcal{E}_0(f)$ we also have for any square $\mathcal{R}$, $\mathbb{P}(\text{Cross}_{\mathcal{Z}_0(f)}(\mathcal{R}))\leq \frac{1}{2}$ which can be thought of as an upper bound of \eqref{eq:8}.

In this paper we are interested in the geometry of the set $\mathcal{E}_\ell(f)$ and $\mathcal{Z}_\ell(f)$ at \textit{criticality} ($\ell=\ell_c=0$). More precisely, our goal is to give a description of the geometry of the connected components of those sets that are of big diameter. In fact, Theorem \ref{thm:rsw} can be interpreted as follows: in a fixed rectangle of scale $\lambda$ there exists a connected component of $\mathcal{E}_0(f)$ (or $\mathcal{Z}_0(f)$) of diameter of order $\lambda$ with a probability that stays bounded away from $0$ as $\lambda$ varies. We prove that those macroscopic connected components (there exists at least one with positive probability) present a fractal behavior with high probability. To be more precise we first introduce some terminology.
\begin{definition}
\label{def:curve}
A \textit{parametrization} of a curve is a continuous function $\gamma : [0,1]\to \R^2$. Two parametrizations $\gamma_1$ and $\gamma_2$ are said to be equivalent if there there exists an homeomorphism $\varphi : [0,1]\to [0,1]$ such that $\gamma_1 = \gamma_2 \circ \varphi.$ A \textit{curve} is an equivalence class for this relation of equivalence. A curve $\mathcal{C}$ is said to be a \textit{rectifiable curve} if there exists a parametrization $\gamma$ of of the curve that is Lipschitz. In that case the \textit{length} of the curve is defined as
\begin{equation}
    \text{length}(\mathcal{C}):= \int_{0}^1 \norm{\gamma'(t)}{}dt,
\end{equation}
where $\norm{\cdot}{}$ denotes the Euclidean distance of $\R^2$. The quantity $\text{length}(\mathcal{C})$ is sometimes referred to as the Euclidean length of the curve.
Since $\gamma$ is Lipschitz, Rademacher's theorem implies that the above integral is well defined and takes value in $\mathbb{R}_+$. Moreover, it can be checked that this integral does not depend on the choice of the Lipschitz parametrization (see \cite{Fal85} for details).
\end{definition}
\begin{remark}
This notion of curve is fairly standard and general enough to allow self intersecting curves. We note in particular that curves that admit a continuous piecewise $\mathcal{C}^1$ parametrization are rectifiable. In the rest of the paper, all curves considered are rectifiable curves even if not mentioned. Moreover, a curve naturally induces a subset of $\R^2$ by considering the set $\{\gamma(t)\ |\ t\in [0,1]\}$ where $\gamma$ is any parametrization of $\mathcal{C}$ (note however that two different curves may induce the same subset of $
\R^2$). When we refer to certain geometric properties of a curve such as the diameter it should be understood as the diameter of this subset of $\R^2$.
\end{remark}
We now define the set of macroscopic curves in a rectangle.
\begin{definition}
\label{def:curves}
Let $\mathcal{R}\subset \R^2$ a non degenerated rectangle, $E\subset \R^2$ be any subset and $\lambda \geq 1$. We denote by $\mathcal{C}(\mathcal{R},E,\lambda)$ the set of continuous and rectifiable curves $\mathcal{C}$ that are included in $\lambda\mathcal{R}\cap E$ and that are of Euclidean diameter at least $\lambda$.
\end{definition}
We state our first theorem.
\begin{theorem}
\label{thm:principal1}
There exists a constant $\beta_0>4$ such that if $q$ satisfies Assumptions \ref{a:a1}, \ref{a:a2} for some $\alpha \geq 3$ and \ref{a:a4} for $\beta > \beta_0$, the following holds. If $E$ denotes either $\mathcal{E}_0(f)$ or $\mathcal{Z}_0(f)$, there exists $s_1>1$ such that for any fixed non degenerated rectangle $\mathcal{R}\subset \R^2$ we have:
\begin{equation}
    \label{eq:thm_principal_1}
    \forall \delta>0,\ \exists C_1>0,\ \forall \lambda>1,\  \Proba{\forall \mathcal{C}\in \mathcal{C}(\mathcal{R},E,\lambda),\ \emph{length}(\mathcal{C})> C_1\lambda^{s_1}}\geq 1-\delta.
\end{equation}
\end{theorem}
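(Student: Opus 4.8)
The plan is to first reduce to a single case and dispatch small scales, then to set up a multiscale ``activation count'' along the curve and run the renormalization of Aizenman--Burchard \cite{AB98}. Since $\mathcal{Z}_0(f)\subset\mathcal{E}_0(f)$ we have $\mathcal{C}(\mathcal{R},\mathcal{Z}_0(f),\lambda)\subset\mathcal{C}(\mathcal{R},\mathcal{E}_0(f),\lambda)$, so it suffices to prove the statement for $E=\mathcal{E}_0(f)$. Moreover the range $\lambda\in(1,\Lambda_0]$ is handled deterministically: any rectifiable curve of Euclidean diameter $\geq\lambda$ has length $\geq\lambda=\lambda^{s_1}\lambda^{1-s_1}\geq\Lambda_0^{1-s_1}\lambda^{s_1}$ since $s_1>1$. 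Hence, after possibly shrinking $C_1$, it is enough to establish the high-probability bound for $\lambda$ large, uniformly over all $\mathcal{C}\in\mathcal{C}(\mathcal{R},\mathcal{E}_0(f),\lambda)$.

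The next step is the elementary lower bound on length in terms of annulus crossings: if $(A_i)_i$, $A_i=A(x_i;r_i,2r_i)$, is a family of pairwise disjoint annuli, then $\text{length}(\mathcal{C})\geq\sum_i r_i\cdot m_i(\mathcal{C})$, where $m_i(\mathcal{C})$ is the number of disjoint sub-arcs of $\mathcal{C}\cap A_i$ joining the two boundary circles. Fix a large integer $L$, set $r_j=\lambda L^{-j}$ for $0\leq j\leq J\asymp\log_L\lambda$, tile $\lambda\mathcal{R}$ by squares of side $r_j$ at level $j$, and call such a square \emph{activated} (for $\mathcal{C}$) if $\mathcal{C}$ crosses the concentric $(r_j,3r_j)$-annulus. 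Let $N_j(\mathcal{C})$ be the number of activated squares. Because $\mathcal{C}$ is connected of diameter $\geq\lambda$, one trivially has $N_j(\mathcal{C})\geq cL^j$, which only yields $\text{length}(\mathcal{C})\gtrsim\lambda$; the whole point is to improve this to $N_J(\mathcal{C})\geq C_1\lambda^{s_1}$ with $s_1>1$. Since an activated unit-scale square forces $\gtrsim 1$ of length, contributed from pairwise disjoint regions, such an improvement gives $\text{length}(\mathcal{C})>C_1\lambda^{s_1}$.

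The heart of the argument is the one-step renormalization estimate: there exist $\eta>0$ and $p_0>0$, independent of the scale $j$, such that for an activated square $Q$ at level $j$, conditionally on the field outside a bounded neighborhood of $Q$, with probability at least $p_0$ the configuration inside forces \emph{every} path of $\{f\geq0\}$ crossing the surrounding annulus to activate at least $L+1$ sub-squares of side $r_{j+1}$, one more than the trivial minimum $L$. Here is where Theorem~\ref{thm:rsw} enters, applied to both $f$ and $-f$: by erecting a finite ``forced-detour'' gadget --- a crossing of $\{f\geq0\}$ together with a protruding finger of $\{f\leq 0\}$ that pinches it, each of the constituent crossing/dual-crossing events having probability bounded away from $0$ and $1$ by the RSW property --- one pinches the corridor of $\{f\geq0\}$ available to the curve and forces an extra excursion. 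The constituent events live in regions that are pairwise separated at distance of order $r_j$, so the gadget has probability bounded below either by positive association when Assumption~\ref{a:a3} is in force, or, in its absence, by the quasi-independence coming from Assumption~\ref{a:a4} with $\beta>\beta_0$; this is precisely why $\beta_0$ must be taken large (in particular $>4$, so that the decoupling error remains small uniformly over all scales up to $\lambda$). I expect this step to be the main obstacle: unlike for a canonical interface curve, $\mathcal{C}$ is an \emph{arbitrary} rectifiable path inside the two-dimensional set $\mathcal{E}_0(f)$, so one must verify that on the gadget event $\mathcal{E}_0(f)$ genuinely admits no short ``efficient'' crossing of the annulus, and that this property is robust under the conditioning used in the multiscale iteration.

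Granting the renormalization estimate, one obtains $\mathbb{E}[\,N_{j+1}(\mathcal{C})\mid N_j(\mathcal{C})\,]\geq L\,(1+\eta/L)\,N_j(\mathcal{C})$, and using the separation-of-scales decoupling afforded by Assumption~\ref{a:a4} (again with $\beta>\beta_0>4$) this in-expectation drift is upgraded to the pointwise bound $N_j(\mathcal{C})\geq C_1(L(1+\eta/L))^{j}$ holding, for $\lambda$ large, with probability at least $1-\delta$ simultaneously for all $\mathcal{C}\in\mathcal{C}(\mathcal{R},\mathcal{E}_0(f),\lambda)$. This last passage is exactly the lower-bound half of the Aizenman--Burchard framework: its hypotheses, namely uniform positive lower bounds on annulus-crossing probabilities for bounded aspect ratio and a quantitative decoupling between well-separated scales, are furnished respectively by Theorem~\ref{thm:rsw} and by Assumption~\ref{a:a4}, and its conclusion is that a curve of diameter $\geq\lambda$ inside the box cannot be covered by fewer than $\gtrsim\lambda^{s_1}$ balls of unit radius, with $s_1:=1+\log_L(1+\eta/L)>1$. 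Taking $r_J\asymp 1$ and summing the per-ball length contributions yields $\text{length}(\mathcal{C})>C_1\lambda^{s_1}$ on this event; combining with the deterministic bound for bounded $\lambda$ completes the proof.
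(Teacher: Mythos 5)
Your reduction to $E=\mathcal{E}_0(f)$ and the deterministic treatment of bounded $\lambda$ are fine, and your ``activated squares'' count is a workable substitute for the energy/capacity machinery (Lemma \ref{lemma:h}, Corollary \ref{cor:1}) that the paper uses to convert a covering-number lower bound into a length lower bound. But the core of your argument --- the one-step renormalization estimate in which a ``forced-detour gadget'' built from RSW crossings of $f$ and $-f$ forces, with conditional probability at least $p_0$ uniformly over the field outside, every path of $\mathcal{E}_0(f)$ through the annulus to activate one extra sub-square --- is exactly the step you yourself flag as the main obstacle, and it is left entirely unproven. This is not a technicality: for a smooth Gaussian field one cannot condition on the field outside a region and retain a uniform lower bound on the conditional probability of an event inside (the conditional law depends on the outside configuration through the long-range correlations), and showing that a local configuration defeats \emph{every} rectifiable curve in the two-dimensional set $\mathcal{E}_0(f)$, rather than a canonical interface, is the whole difficulty. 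Moreover your gadget needs lower bounds on crossing probabilities of non-square rectangles, which by Theorem \ref{thm:rsw} are only available under Assumption \ref{a:a3}; Theorem \ref{thm:principal1} deliberately does not assume \ref{a:a3}, and quasi-independence (an additive error bound) cannot rescue a lower bound on the probability of an intersection of events whose individual probabilities you cannot bound below.

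The paper's mechanism runs in the opposite direction and avoids all of this. A straight run at scale $L$ is a crossing of an $L\times (9L/\sqrt{\gamma})$ rectangle, which forces simultaneous length-direction crossings of order $\sqrt{\gamma}$ well-separated $2l\times l$ sub-rectangles; each such crossing has probability at most $1/2$ by the $f\mapsto -f$ symmetry alone (no RSW lower bound, no FKG), and Lemma \ref{lemma:quasi_indep} (quasi-independence from \cite{RV19}, \cite{BMR20}, using $\beta>4$) upgrades this to a bound of the form $2^{-N}(1+\text{error})$ on the joint probability. A union bound over positions and over nested sequences of scales (Proposition \ref{prop:sparse}) then shows that straight runs are $(\gamma,k_0)$-sparse with probability at least $1-\delta$, and on that purely deterministic event Proposition \ref{prop:low_energy} produces a low-energy measure on every macroscopic curve simultaneously, whence the length bound via Corollary \ref{cor:1}. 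To complete your proof you must either supply the gadget estimate in a conditioning-free formulation valid without Assumption \ref{a:a3}, or switch to the upper-bound mechanism: it is the improbability of straight runs, not the positive probability of forced detours, that drives the tortuosity here.
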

Before stating corollaries of Theorem \ref{thm:principal1}, we make a few comments on the hypotheses and the statement of this theorem. First, this theorem should be interpreted by saying that with high probability the macroscopic connected component of $\mathcal{E}_0(f)$ (or $\mathcal{Z}_0(f)$) are very tortuous in the sense that any macroscopic curve drawn on it must travel a distance of order $\lambda^{s_1}\gg \lambda$. This behavior is very different to the behavior in the supercritical phase ($\ell>0$) where previous work (see \cite{Ver23}) shows that with high probability whenever two points are connected in $\mathcal{E}_\ell(f)$ the chemical distance (that is the length of the shortest path connecting them in $\mathcal{E}_\ell(f)$) is at most quasilinear in the Euclidean distance between those two points. In the critical phase however, Theorem \ref{thm:principal1} shows that this cannot be the case. We also make a few technicals remarks on the statement of Theorem \ref{thm:principal1}. First of all, note that the event written in the probability in \eqref{eq:thm_principal_1} may not be measurable with respect to the usual $\sigma$-field generated by the finite collections of values of the field $f$. However \eqref{eq:thm_principal_1} should be interpreted by saying that there exists a measurable event of probability at least $1-\delta$ under which all curves in $\mathcal{C}(\mathcal{R},E,\lambda)$ have length at least $C_1\lambda^{s_1}$. Secondly, we mention that the value of $\beta_0$ we obtain is difficult to track and in general we do not expect our hypotheses on $q$ to be optimal. Nevertheless, we bring the attention of the reader to the fact that Theorem \ref{thm:principal1} itself does not requires that $q$ satisfies Assumption \ref{a:a3}. However, in order to show that $\mathcal{C}(\mathcal{R},E,\lambda)$ is not empty with positive probability one may need to add this hypothesis depending on the shape of the rectangle $\mathcal{R}$ as discussed after Theorem \ref{thm:rsw}. Theorem \ref{thm:principal1} is the adaptation of the result of Aizenman and Burchard in \cite{AB98} that holds for very general systems of random curves. More precisely, the arguments developed in \cite{AB98} are general enough to adapt to the case of continuous rectifiable curves. However, one of the step in \cite{AB98} requires the random field $f$ to behave close to independently in disjoint boxes which is a property that is not easy to verify for our continuous field $f$. In order to prove Theorem \ref{thm:principal1}, we follow the strategy in \cite{AB98}, our main contribution is to deal with lack of independence by using a quasi-independence result for nodal lines of smooth Gaussian fields developed in \cite{RV19}, \cite{BMR20}, it turns out that the result we obtain with this quasi independence result is not strong enough to be directly applied in the proof of Aizenman and Burchard and we do a careful analysis of their proof to solve this problem.

We now collect some corollaries of Theorem \ref{thm:principal1}. One curve of interest in a rectangle $\mathcal{R}$ is a shortest curve in $\mathcal{E}_0(f)$ (or in $\mathcal{Z}_0(f)$) that crosses $\mathcal{R}$ if such a curve exists. We thus introduce the following definition.
\begin{definition}
\label{def:S_E}
Let $\mathcal{R}\subset \R^2$ be a a non degenerated rectangle and $E\subset \R^2$ be a subset. On the event $\text{Cross}_E(\mathcal{R})$ we denote by $S_E(\mathcal{R})$ the infimum of the Euclidean length of all continuous and rectifiable curves that cross the rectangle $\mathcal{R}$ in $E$.
\end{definition}
\begin{remark}
Note that when $E$ denotes one of $\mathcal{E}_0(f)$ or $\mathcal{Z}_0(f)$ then classical regularity arguments (see for instance \cite[Lemma A.9]{RV19}) show that with probability $1$, on the event $\text{Cross}_E(\mathcal{R})$, one can indeed find a continuous and rectifiable curve that realizes the crossing.
\end{remark}
We have the following consequences of Theorem \ref{thm:principal1}.
\begin{corollary}
\label{cor:1_1}
Under the same assumptions than Theorem \ref{thm:principal1}, if $q$ additionally satisfies Assumption \ref{a:a3} (weak positivity) then the following is satisfied. There exists a constant $s_1>1$ such that for any non degenerated rectangle $\mathcal{R}\subset \R^2$,
\begin{align}
    & \forall \delta>0,\ \exists C_1>0,\ \forall \lambda>1,\ \mathbb{P}(S_{\mathcal{E}_0(f)}(\lambda \mathcal{R})>C_1\lambda^{s_1}\ |\ \emph{Cross}_{\mathcal{E}_0(f)}(\lambda \mathcal{R}))\geq 1-\delta.
\\
    & \forall \delta>0,\ \exists C_1>0,\ \forall \lambda>1,\ \mathbb{P}(S_{\mathcal{Z}_0(f)}(\lambda \mathcal{R})>C_1\lambda^{s_1}\ |\ \emph{Cross}_{\mathcal{Z}_0(f)}(\lambda \mathcal{R}))\geq 1-\delta.
\end{align}
\end{corollary}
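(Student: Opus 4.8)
The plan is to deduce this directly from Theorem \ref{thm:principal1} by observing that a shortest crossing curve, when it exists, is in particular a curve in the set $\mathcal{C}(\lambda\mathcal{R}, E, c\lambda)$ for a suitable constant $c>0$ depending only on the aspect ratio of $\mathcal{R}$. Indeed, if $\mathcal{R}$ has length $a$ and height $b$, then any curve crossing $\lambda\mathcal{R}$ in the length direction and staying inside $\lambda\mathcal{R}\cap E$ has Euclidean diameter at least $a\lambda$ (it joins the two short sides, which are at distance $a\lambda$ apart); hence it belongs to $\mathcal{C}(\mathcal{R}', E, \lambda)$ where $\mathcal{R}'$ is the rescaled rectangle $\tfrac1a\mathcal{R}$ evaluated at dilation parameter $a\lambda$, or more simply we just apply Theorem \ref{thm:principal1} with the fixed rectangle $\mathcal{R}$ and note that on $\text{Cross}_E(\lambda\mathcal{R})$ the crossing curve has diameter $\geq a\lambda \geq \min(1,a)\lambda$. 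Since $s_1>1$, replacing $\lambda$ by a constant multiple only changes the constant $C_1$, so this rescaling is harmless.

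The steps I would carry out are as follows. First, fix $\delta>0$ and let $s_1>1$ be the exponent provided by Theorem \ref{thm:principal1} for the set $E$ (which is one of $\mathcal{E}_0(f)$ or $\mathcal{Z}_0(f)$). Second, apply \eqref{eq:thm_principal_1} with a possibly smaller error level $\delta' \le \delta$ to obtain $C_1'>0$ such that for all $\lambda>1$,
\[
\Prob\left( \forall \mathcal{C}\in \mathcal{C}(\mathcal{R},E,\lambda),\ \text{length}(\mathcal{C})> C_1'\lambda^{s_1}\right)\geq 1-\delta'.
\]
Third, by the Remark following Definition \ref{def:S_E}, with probability $1$, on the event $\text{Cross}_E(\lambda\mathcal{R})$ there exists a continuous rectifiable curve $\mathcal{C}_0$ realizing (up to arbitrarily small error, by the infimum) the crossing, and such a curve lies in $\lambda\mathcal{R}\cap E$ with diameter at least $(\text{length of }\mathcal{R})\cdot\lambda$. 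After rescaling the ambient rectangle (or absorbing the constant aspect-ratio factor into $C_1'$, using $s_1>1$), $\mathcal{C}_0\in\mathcal{C}(\mathcal{R},E,\lambda)$ up to this harmless constant, so on the intersection of $\text{Cross}_E(\lambda\mathcal{R})$ with the event above we get $S_E(\lambda\mathcal{R}) \ge C_1 \lambda^{s_1}$ for a suitable $C_1>0$. Fourth, pass to the conditional probability: using that by Theorem \ref{thm:rsw} (which applies under Assumption \ref{a:a3} (weak positivity)) we have $\Prob(\text{Cross}_E(\lambda\mathcal{R})) \ge c > 0$ uniformly in $\lambda$, it follows that
\[
\Prob\left(S_E(\lambda\mathcal{R})> C_1\lambda^{s_1}\ \middle|\ \text{Cross}_E(\lambda\mathcal{R})\right) \geq 1 - \frac{\delta'}{c},
\]
and choosing $\delta' = c\delta$ at the outset yields the claimed bound $\geq 1-\delta$.

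The only genuinely delicate point is the measurability caveat: the event in \eqref{eq:thm_principal_1} is interpreted, as explained after Theorem \ref{thm:principal1}, as \emph{there exists} a measurable event of probability $\ge 1-\delta'$ on which all curves in $\mathcal{C}(\mathcal{R},E,\lambda)$ are long. One must therefore phrase the conclusion the same way — there is a measurable event $\mathcal{G}_\lambda$ with $\Prob(\mathcal{G}_\lambda)\ge 1-\delta'$ on which $S_E(\lambda\mathcal{R})>C_1\lambda^{s_1}$ whenever $\text{Cross}_E(\lambda\mathcal{R})$ holds — and then the conditional statement follows from the elementary inequality $\Prob(\mathcal{G}_\lambda\mid\text{Cross}_E(\lambda\mathcal{R}))\ge 1 - \Prob(\mathcal{G}_\lambda^c)/\Prob(\text{Cross}_E(\lambda\mathcal{R}))$. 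I do not expect any real obstacle here; the corollary is essentially a reformulation of Theorem \ref{thm:principal1} combined with the RSW lower bound of Theorem \ref{thm:rsw} to make the conditioning meaningful, plus the regularity remark ensuring crossing curves can be taken rectifiable.
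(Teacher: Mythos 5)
Your proposal is correct and follows essentially the same route as the paper: the paper also reduces to the observation that any crossing curve of $\lambda\mathcal{R}$ belongs to $\mathcal{C}(\mathcal{R},E,\lambda)$ (normalizing $\mathcal{R}$ to unit length rather than rescaling $\lambda$), intersects the high-probability event from Theorem \ref{thm:principal1} with $\text{Cross}_E(\lambda\mathcal{R})$, and divides by $\Proba{\text{Cross}_E(\lambda\mathcal{R})}\geq c>0$ from Theorem \ref{thm:rsw}. Your explicit choice $\delta'=c\delta$ and the measurability remark are minor refinements of the same argument.
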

Moreover, one may avoid assuming Assumption \ref{a:a3} by instead considering rectangles of some fixed shapes.
\begin{corollary}
\label{cor:1_2}
Under the same assumptions than Theorem \ref{thm:principal1}, let $\mathcal{R}=\left[-\frac{1}{2},\frac{1}{2}\right]^2,$ we have a constant $s_1>1$ such that
\begin{equation}
    \forall \delta>0,\ \exists C_1>0,\ \forall \lambda>1,\ \mathbb{P}(S_{\mathcal{E}_0(f)}(\lambda \mathcal{R})>C_1\lambda^{s_1}\ |\ \emph{Cross}_{\mathcal{E}_0(f)}(\lambda \mathcal{R}))\geq 1-\delta.
\end{equation}
Here the crossing of $\lambda \mathcal{R}$ refers to the crossing in the horizontal direction.
\end{corollary}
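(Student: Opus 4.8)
The plan is to derive Corollary \ref{cor:1_2} directly from Theorem \ref{thm:principal1} applied with $E = \mathcal{E}_0(f)$, the only extra ingredient being a lower bound, uniform in $\lambda$, on the probability of the crossing event that serves as the conditioning. For the square $\mathcal{R} = \left[-\frac12,\frac12\right]^2$, recall from the discussion following Theorem \ref{thm:rsw} that, by duality (using that $f$ and $-f$ have the same law) together with the $\pi/2$-rotational invariance of the model guaranteed by Assumption \ref{a:a1}, one has $\mathbb{P}(\text{Cross}_{\mathcal{E}_0(f)}(\lambda\mathcal{R})) = \frac12$ for every $\lambda > 1$. In particular this probability is bounded below by a positive constant independent of $\lambda$, and this is precisely why Assumption \ref{a:a3} is not needed here, in contrast with Corollary \ref{cor:1_1} where the analogous bound for an arbitrary rectangle is supplied by the RSW estimate \eqref{eq:7} and therefore requires weak positivity.

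Fix $\delta > 0$ and apply Theorem \ref{thm:principal1} to $E = \mathcal{E}_0(f)$ and the square $\mathcal{R}$ with the parameter $\delta/2$ in place of $\delta$: this produces $s_1 > 1$ and a constant $C_1' > 0$ such that for every $\lambda > 1$ there is a measurable event $A_\lambda$ with $\mathbb{P}(A_\lambda) \geq 1 - \delta/2$ on which every curve $\mathcal{C} \in \mathcal{C}(\mathcal{R}, \mathcal{E}_0(f), \lambda)$ satisfies $\text{length}(\mathcal{C}) > C_1'\lambda^{s_1}$. I would then use the following observation: on the event $\text{Cross}_{\mathcal{E}_0(f)}(\lambda\mathcal{R})$ there is, with probability $1$, a continuous rectifiable curve contained in $\mathcal{E}_0(f) \cap \lambda\mathcal{R}$ realizing the horizontal crossing (by the regularity remark following Definition \ref{def:S_E}); any such curve meets both vertical sides $\{x_1 = \pm\lambda/2\}$ of $\lambda\mathcal{R}$, hence has Euclidean diameter at least $\lambda$, and therefore belongs to $\mathcal{C}(\mathcal{R}, \mathcal{E}_0(f), \lambda)$. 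Consequently, on $A_\lambda \cap \text{Cross}_{\mathcal{E}_0(f)}(\lambda\mathcal{R})$ every crossing curve has length exceeding $C_1'\lambda^{s_1}$, so $S_{\mathcal{E}_0(f)}(\lambda\mathcal{R}) \geq C_1'\lambda^{s_1}$; setting $C_1 := C_1'/2$ we get the strict inequality $S_{\mathcal{E}_0(f)}(\lambda\mathcal{R}) > C_1\lambda^{s_1}$ on that event.

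It remains to pass to the conditional probability. Writing $B_\lambda := \{S_{\mathcal{E}_0(f)}(\lambda\mathcal{R}) > C_1\lambda^{s_1}\}$, the previous step gives the inclusion $\text{Cross}_{\mathcal{E}_0(f)}(\lambda\mathcal{R}) \setminus B_\lambda \subseteq A_\lambda^c$, hence $\mathbb{P}(\text{Cross}_{\mathcal{E}_0(f)}(\lambda\mathcal{R}) \setminus B_\lambda) \leq \mathbb{P}(A_\lambda^c) \leq \delta/2$. Dividing by $\mathbb{P}(\text{Cross}_{\mathcal{E}_0(f)}(\lambda\mathcal{R})) = \frac12$ then yields
\begin{equation*}
\mathbb{P}\bigl(S_{\mathcal{E}_0(f)}(\lambda\mathcal{R}) > C_1\lambda^{s_1} \;\big|\; \text{Cross}_{\mathcal{E}_0(f)}(\lambda\mathcal{R})\bigr) = 1 - \frac{\mathbb{P}(\text{Cross}_{\mathcal{E}_0(f)}(\lambda\mathcal{R}) \setminus B_\lambda)}{\mathbb{P}(\text{Cross}_{\mathcal{E}_0(f)}(\lambda\mathcal{R}))} \geq 1 - \frac{\delta/2}{1/2} = 1 - \delta
\end{equation*}
for every $\lambda > 1$, which is the claimed estimate.

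The argument is essentially bookkeeping on top of Theorem \ref{thm:principal1}; the only genuine point of content is the exact identity $\mathbb{P}(\text{Cross}_{\mathcal{E}_0(f)}(\lambda\mathcal{R})) = \frac12$ for the square, and this is exactly what lets the restriction to $\mathcal{R} = \left[-\frac12,\frac12\right]^2$ dispense with the positivity-of-correlations hypothesis. I do not anticipate a serious obstacle here; the one place requiring a little care is the measurability caveat attached to Theorem \ref{thm:principal1}, which is why the plan is to phrase everything through the measurable event $A_\lambda$ furnished by that theorem rather than through the (possibly non-measurable) event that all curves in $\mathcal{C}(\mathcal{R}, \mathcal{E}_0(f), \lambda)$ are long.
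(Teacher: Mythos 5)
Your proposal is correct and follows essentially the same route as the paper: apply Theorem \ref{thm:principal1} to get that all macroscopic curves (in particular all crossing curves) are long with probability at least $1-\delta$, then divide by $\mathbb{P}(\text{Cross}_{\mathcal{E}_0(f)}(\lambda\mathcal{R}))=\frac{1}{2}$, which is obtained exactly as in the paper from the symmetry $f\sim -f$ and the $\pi/2$-rotational invariance, thereby dispensing with Assumption \ref{a:a3}. The only differences are cosmetic bookkeeping (using $\delta/2$ and halving the constant to pass from a non-strict to a strict inequality).
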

\begin{remark}
We could also state a similar corollary concerning $\mathcal{Z}_0(f)$ without Assumption \ref{a:a3}. In fact, if one take $\mathcal{R}=[0,1]\times [0,3]$, it can be shown using duality argument together with a quasi independence result, that the probability that $\lambda \mathcal{R}$ is crossed horizontally by $\mathcal{Z}_0(f)$ is bounded away from $0$ as $\lambda$ varies. Note however that this notion of crossing is weaker than the notion defined in Definition \ref{def:cross} since here we consider crossing of the rectangle by joining the big sides of the rectangle instead of the small ones.
\end{remark}
For the convenience of the reader, we briefly mention an example of a random field that does not satisfy Assumption \ref{a:a3} but for which Corollary \ref{cor:1_2} applies. We present the example that was introduced in \cite{BG17} and we refer the reader to this paper for the details on the construction. Consider a compact smooth Riemannian manifold $(M,g)$ of dimension $2$. Consider $\Delta$ its Laplacian and $(\varphi_i)_{i\geq 0}$ an Hilbert orthogonal basis composed of the eigenfunctions of $\Delta$ ($\varphi_i$ is of eigenvalue $\lambda_i$). Consider $\chi : \R_+^* \to \R_+$ a smooth function with compact support containing $1$. Let $(a_i)_{i\geq 0}$ a collection of independent standard Gaussian random variables. For $L>0$, one can define the random field
$$f_{\chi,L} := \sum_{i\geq 0}a_i \chi\left(\frac{\lambda_i}{L}\right)\varphi_i.$$
This field can be interpreted as a smooth cutoff of the infinite sum $\sum_{i\geq 0}a_i\varphi,$ where we mainly keep the eigenfunctions with eigenvalues close to $L$. It can be checked that in normal coordinates near a point $x_0=0$ we have
$$\forall (x,y)\in (\R^2)^2,\ \mathbb{E}\left[f_{\chi,L}\left(\frac{x}{L}\right)f_{\chi,L}\left(\frac{y}{L}\right)\right]\xrightarrow[L\to \infty]{}K_\chi(x,y),$$
where $K_\chi : \R^2 \times \R^2 \to \R$ is a positive definite kernel. Moreover we have
\begin{equation}
    \label{eq:kernel_without_fkg}
    \forall (x,y)\in (\R^2)^2,\ K_\chi(x,y) = \int_{\xi\in \R^2}\chi(\norm{\xi}{}^2)e^{i\scal{x-y}{\xi}}d\xi.
\end{equation}
Note that in \eqref{eq:kernel_without_fkg}, the norm and scalar product denote the usual Euclidean norm and scalar product. The smoothness of $\chi$ and the fact that $\chi$ has compact support ensure that $K_\chi$ has at least super-polynomial decay when $\norm{x-y}{}\xrightarrow[]{}\infty$ and that $K_\chi$ is smooth. However, when $\kappa$ approximates the Dirac distribution $\delta_1$ we see that the kernel $K_\chi$ converges on all compacts to the kernel $K$ associated to the random wave model. That is
\begin{equation}
    \label{eq:random_wave_model}
    \forall (x,y)\in (\R^2)^2,\ K(x,y) = \int_{\xi\in \R^2, \norm{\xi}{}=1}e^{i\scal{x-y}{\xi}}d\xi = J_0(\norm{x-y}{}),
\end{equation}
where $J_0$ denotes the Bessel function. Since $K$ takes negatives values, if one takes $\chi$ a good enough approximation of $\delta_1$ we obtain $K_\chi$ a kernel that decays fast but that does not satisfy Assumption \ref{a:a3}.

While Theorem \ref{thm:principal1} together with Corollaries \ref{cor:1_1} and \ref{cor:1_2} are evidence of the fractal behavior of the macroscopic components of $\mathcal{E}_0(f)$ and $\mathcal{Z}_0(f)$, we now state our second theorem that shows that this behavior cannot be too degenerated.
\begin{theorem}
\label{thm:principal2}
There exists a constant $\alpha_0\geq 3$ such that if $q$ satisfies Assumptions \ref{a:a1}, \ref{a:a2} for some $\alpha \geq \alpha_0$, Assumption \ref{a:a3} (weak positivity) and \ref{a:a4} for $\beta > 2$ the following holds. If $E$ denotes either $\mathcal{E}_0(f)$ or $\mathcal{Z}_0(f)$, there exists $s_2<2$ such that for any non degenerated rectangle $\mathcal{R}\subset \R^2$,
\begin{equation}
    \forall \delta>0,\ \exists C_2>0,\ \forall \lambda >1,\  \Proba{S_E(\lambda \mathcal{R}) < C_2\lambda^{s_2}\ |\ \emph{Cross}_E(\lambda \mathcal{R})}\geq 1-\delta.
\end{equation}
\end{theorem}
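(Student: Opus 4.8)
\emph{Proof strategy.} The plan is to bound the \emph{expected} length of a distinguished crossing by a first--moment computation and then conclude by Markov's inequality; the only probabilistic input beyond Theorem~\ref{thm:rsw} is the polynomial decay of one--arm probabilities established in \cite{BG16}. We may assume the crossing is horizontal and write $B_\lambda:=\lambda\mathcal{R}$, with width $w\geq c_{\mathcal{R}}\lambda$ in the crossing direction. Since weak positivity is assumed, Theorem~\ref{thm:rsw} gives $\mathbb{P}(\text{Cross}_E(B_\lambda))\geq c_0>0$ for all $\lambda>1$. Hence it suffices to produce a fixed $\eta>0$ and a constant $C$ with
\begin{equation*}
\mathbb{E}\big[\mathbbm{1}_{\text{Cross}_E(B_\lambda)}\,S_E(B_\lambda)\big]\;\leq\; C\,\lambda^{\,2-\eta}\qquad(\lambda>1):
\end{equation*}
then $\mathbb{E}[S_E(B_\lambda)\mid\text{Cross}_E(B_\lambda)]\leq (C/c_0)\lambda^{2-\eta}$, and setting $s_2:=2-\eta/2<2$, Markov's inequality yields $\mathbb{P}(S_E(B_\lambda)>C_2\lambda^{s_2}\mid\text{Cross}_E(B_\lambda))\leq (C/c_0C_2)\,\lambda^{-\eta/2}\leq C/(c_0C_2)$ for all $\lambda>1$, which is $\leq\delta$ as soon as $C_2:=C/(c_0\delta)$ (with the bounded range of small $\lambda$ absorbed into $C_2$, since $\mathbb{E}[\mathbbm{1}_{\text{Cross}}S_E(B_\lambda)]$ is finite there as well).

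\emph{A competitor curve and its local length.} On $\text{Cross}_E(B_\lambda)$ let $\gamma$ be the \emph{lowest} crossing of $B_\lambda$ in $E$. This is a measurable, simple, rectifiable curve, and by standard regularity arguments (see e.g.\ \cite[Appendix A]{RV19}) it is contained in $\mathcal{Z}_0(f)\cup\partial B_\lambda$ (and even in $\mathcal{Z}_0(f)$ when $E=\mathcal{Z}_0(f)$); in particular $S_E(B_\lambda)\leq\text{length}(\gamma)$. Tile $B_\lambda$ by unit squares $Q_1,\dots,Q_M$ with $M\leq C_{\mathcal{R}}\lambda^2$, so that $\text{length}(\gamma)=\sum_{i}\text{length}(\gamma\cap Q_i)$. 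Since $\gamma$ is simple and lies on $\mathcal{Z}_0(f)\cup\partial B_\lambda$, for each $i$ one has $\text{length}(\gamma\cap Q_i)\leq L_i+4$, where $L_i:=\text{length}(\mathcal{Z}_0(f)\cap Q_i)$; by stationarity the law of $L_i$ is independent of $i$, and by Kac--Rice type bounds for smooth Gaussian fields --- valid under Assumption~\ref{a:a2} provided $\alpha$ is large enough, which is where the threshold $\alpha_0$ enters --- one has $\mathbb{E}[(L_i+4)^2]\leq C_\ast<\infty$ uniformly in $i$ and $\lambda$.

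\emph{Visiting a square forces a macroscopic arm.} The crucial point is that if $\gamma$ meets $Q_i$ then, being a connected subset of $E\subseteq\mathcal{E}_0(f)$ that reaches both crossed sides of $B_\lambda$, it has diameter at least $w\geq c_{\mathcal{R}}\lambda$ --- and this holds \emph{uniformly} in $i$, boundary squares included. Hence $\mathbbm{1}_{\text{Cross}_E(B_\lambda)}\mathbbm{1}_{\{\gamma\cap Q_i\neq\emptyset\}}\leq\mathbbm{1}_{A_i}$, where $A_i$ is the (measurable) event that some connected subset of $\mathcal{E}_0(f)$ of diameter at least $c_{\mathcal{R}}\lambda$ meets $Q_i$ (for $E=\mathcal{Z}_0(f)$ we used $\mathcal{Z}_0(f)\subseteq\mathcal{E}_0(f)$). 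The polynomial one--arm decay of \cite{BG16} --- whose hypotheses $\alpha\geq 3$, weak positivity and $\beta>2$ are in force --- provides a fixed $\eta>0$ with $\mathbb{P}(A_i)\leq c_1\lambda^{-\eta}$ for all $i$ and all $\lambda\geq 2$. Combining this with the previous paragraph and the Cauchy--Schwarz inequality,
\begin{align*}
\mathbb{E}\big[\mathbbm{1}_{\text{Cross}_E(B_\lambda)}S_E(B_\lambda)\big]
&\leq\sum_{i=1}^{M}\mathbb{E}\big[(L_i+4)\,\mathbbm{1}_{A_i}\big]
\leq\sum_{i=1}^{M}\mathbb{E}\big[(L_i+4)^2\big]^{1/2}\,\mathbb{P}(A_i)^{1/2}\\
&\leq C_{\mathcal{R}}\lambda^2\cdot C_\ast^{\,1/2}\cdot\big(c_1\lambda^{-\eta}\big)^{1/2}
\;=\;C\,\lambda^{\,2-\eta/2},
\end{align*}
which is the required bound (with $\eta/2$ playing the role of $\eta$ in the first paragraph, so that one may take $s_2=2-\eta/4<2$).

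\emph{Main difficulty.} The argument is a clean first--moment estimate once two routine but not entirely trivial facts are secured: the regularity/topological statement that the lowest crossing may be taken simple and supported on the nodal line with finite length, together with the uniform-in-$\lambda$ second moment bound on $L_i$ coming from Kac--Rice; and the verification that the one--arm estimate of \cite{BG16}, combined with the observation that \emph{every} visited square (including those touching $\partial B_\lambda$) carries a $c_{\mathcal{R}}\lambda$-arm, applies simultaneously to all of the $\asymp\lambda^2$ squares $Q_i$. The genuine probabilistic content is entirely carried by the positivity of the one--arm exponent $\eta$.
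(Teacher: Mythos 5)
Your argument is correct in substance and shares the paper's core mechanism --- tile $\lambda\mathcal{R}$ by unit boxes, use the one-arm decay of Theorem \ref{thm:one_arm} to show that only $O(\lambda^{2-\eta})$ boxes can be visited by a macroscopic crossing, and control the length contributed by each visited box --- but your implementation of the second step is genuinely different from the paper's. The paper never touches the nodal length directly: it introduces the chemical-diameter functional $S(E,B_j)$ of Definition \ref{def:SAB}, invokes the $k$-th moment bounds of Proposition \ref{prop:moment} (from \cite{Ver23}), and runs a union bound over all $\asymp\lambda^2$ boxes for the events $\{S(E,B_j)\le C_4\lambda^{\eta/2}\}$; this forces $k=\alpha_0-1$ to satisfy $(\eta/2)(\alpha_0-1)\ge 2$, which is where its (large, $\eta$-dependent) threshold $\alpha_0$ comes from. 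You instead make a single first-moment computation, $\mathbb{E}[\mathbbm{1}_{\text{Cross}}S_E]\le\sum_i\mathbb{E}[(L_i+4)\mathbbm{1}_{A_i}]$, and decouple the local length from the arm event by Cauchy--Schwarz, so you only ever need a \emph{second} moment of the local nodal length; if that second moment indeed holds under Assumption \ref{a:a2} with $\alpha\ge 3$ (as Proposition \ref{prop:moment} with $k=2$ suggests), your route gives $\alpha_0=3$, a real improvement over the paper. The price is the geometric input you flag yourself: that on $\text{Cross}_{\mathcal{E}_0(f)}(\lambda\mathcal{R})$ there is a measurable, \emph{simple} crossing arc supported on $\mathcal{Z}_0(f)\cup\partial(\lambda\mathcal{R})$, so that its length in a unit square is dominated by $\mathcal{H}^1(\mathcal{Z}_0(f)\cap Q_i)+O(1)$. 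This is true (a.s.\ $0$ is a regular value of $f$, so the lowest crossing lies on the topological boundary of the excursion set), but it is exactly the kind of regularity/topology argument the paper deliberately sidesteps by working with chemical diameters, and you should either prove it or cite it precisely rather than gesture at \cite[Appendix A]{RV19}. Two minor points: the degradation from $s_2=2-\eta/2$ to $2-\eta/4$ in your last step is unnecessary, since the statement allows $C_2$ to depend on $\delta$ and Markov already gives $\mathbb{P}(S_E>C_2\lambda^{2-\eta/2}\mid\text{Cross})\le C/(c_0C_2)$; and to avoid measurability worries about $S_E$ itself, apply Markov to the measurable majorant $\sum_i(L_i+4)\mathbbm{1}_{A_i}$ rather than to $\mathbbm{1}_{\text{Cross}}S_E$.
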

Before presenting the strategy of the proof of both Theorem \ref{thm:principal1} and \ref{thm:principal2} we make a few comments.
The fact that one can choose $s_2<2$ in Theorem \ref{thm:principal2} shows that the shortest crossing is not too degenerated when it exists. In fact, a classical application of the Kac-Rice formula shows that the expected length of all the nodal lines contained in $\lambda \mathcal{R}$ is of order $\lambda^2$. Thus, Theorem \ref{thm:principal2} implies that the shortest crossing is far from enough to collect all this length of nodal lines. Indeed, there are also many connected components of $\mathcal{Z}_0(f) \cap \lambda \mathcal{R}$ that are not macroscopic but count towards the total length of order $\lambda^2$. Our next remark is that it is conjectured for Bernoulli percolation that there should by an exponent $1<s<2$ such that the length of the shortest crossing in a box of size $\lambda$ (conditioned on the existence of the crossing) is of order $\lambda^s$. To our knowledge, the determination of the value of $s$ is still an open problem even for Bernoulli percolation (see \cite{Sch11} Problem 3.3). Although recent progress in \cite{DHS17} show that this exponent should be strictly lower than the exponent of the lowest crossing of the box $\lambda \mathcal{R}$ in the case of critical Bernoulli bond percolation on $\Z^2$. To conclude, we comment that the value of $\alpha_0$ obtained is certainly not optimal and in general we do not believe our assumptions on the field to be optimal.

We now describe the strategy of the proof of both Theorems \ref{thm:principal1} and \ref{thm:principal2}.
Concerning Theorem \ref{thm:principal1}, as mentioned earlier the proof is essentially based on the argument developed in \cite{AB98}. Aizenman and Burchard present in \cite{AB98} a very general argument to prove that random curve satisfying some hypothesis present a fractal behavior. The hypothesis they make on the random curve is that the probability that the curve crosses $n$ rectangles that are \textit{well-separated} (see Definition \ref{def:well_separated}) decays geometrically in $n$. Although this is easy to check in the context of Bernoulli percolation (since we have independence), this is not the case in the context of continuous Gaussian fields. We address this problem by using a quasi-independence result for nodal lines developed in \cite{RV19} and \cite{BMR20}. However, we do not manage to obtain the exact hypothesis made by Aizenman and Burchard in \cite{AB98} and we do a careful analysis of their argument in \cite{AB98} to conclude the proof of Theorem \ref{thm:principal1}. For the proof of Theorem \ref{thm:principal2}, we argue that by a result of \cite{BG16}, the number of boxes of size $1$ visited by the curve realising the crossing of $\lambda \mathcal{R}$ is of order at most $\lambda^{2-\eta}$ where $\eta$ is the one-arm exponent (see \eqref{eq:onearmexponent}). We then use a result in previous work \cite{Ver23} allowing us to control the chemical distance in each of these boxes of size $1$.

Since the general strategy of the proof of Theorem \ref{thm:principal1} comes from \cite{AB98},  Section \ref{sec:2} is dedicated to the introduction of some terminology and some results of \cite{AB98} that are adapted to our framework with Gaussian fields. In Section \ref{sec:3}, we use the quasi-independence result of \cite{RV19} and \cite{BMR20} to conclude the proof of  of Theorem \ref{thm:principal1}, we also give the proof of Theorem \ref{thm:principal2}.

\textbf{Acknowledgements : }I would like to thank my PhD advisor Damien Gayet for introducing me to this problem as well as for his remarks on a preliminary version of this paper. I also would like to thank Vincent Beffara for mentioning \cite{AB98} to us.

\section{Deterministic control}
\label{sec:2}
In this section we adapt and restate results of \cite{AB98} to adapt to our framework. In this whole section $\lambda>1$ denotes real parameter, $\mathcal{R}\subset \R^2$ is any non degenerated rectangle, $E\subset \R^2$ can be any subset (typically $E$ can thought of as $\mathcal{E}_0(f)$ or $\mathcal{Z}_0(f)$), $\mathcal{C}$ is a continuous and rectifiable curve included in $\lambda \mathcal{R}$ of diameter at least $\lambda$ (that is an element of $\mathcal{C}(\mathcal{R},E,\lambda)$ with Definition \ref{def:curves}). We begin with a definition.
\begin{definition}
\label{not:1}
A triplet $(m,\gamma,s)\in \mathbb{N}\times \R\times \R$ is said to be a \textit{renormalization triplet} if $s>1$ and if we have $1\leq m< \gamma<\gamma^s < \sqrt{m(m+1)}.$ Given, a renormalization triplet $(m,\gamma,s)$, for any integer $k$ we defined the $k$-th scale as $L_k = \frac{\lambda}{\gamma^k}$. We define $k_\text{max}$ to be the biggest integer $k$ such that $\varepsilon L_k \geq 1$ where $\varepsilon:=\frac{\gamma}{m}-1>0.$
\end{definition}
\begin{remark}
The definition of a renormalization triplet may seem strange at first glance. The reason such a choice was made will be made clear in Proposition \ref{prop:low_energy} and \ref{prop:sparse}. We note however that for any $m\geq 1$ a renormalization triplet $(m,\gamma,s)$ can be built. In fact we can choose some $s>1$ close enough to $1$ so that $m^s<\sqrt{m(m+1)}$, and then choose any $\gamma>m$ close enough to $m$ such that $\gamma^s<\sqrt{m(m+1)}.$ We also remark that most of the results in this section are empty whenever $\lambda <\gamma$, it may therefore be convenient to have in mind big values of $\lambda$ (so that $k_{\text{max}}>1$).
\end{remark}
One should keep in mind that we will apply the result of this section to the random set $E=\mathcal{E}_0(f)$ or $E=\mathcal{Z}_0(f)$. However since the results of this section are completely deterministic we choose to state them with a fixed set $E$ and a fixed curve $\mathcal{C}$.

We introduce a few definitions that come from \cite{AB98}.
\begin{definition}
\label{def:sparse}
In the following, $(m,\gamma,s)$ denotes a renormalization triplet,
\begin{itemize}
    \item A \textit{straight run} of $E$ at some scale $L\in \mathbb{R}_+$ is the data of a rectangle $\mathcal{R}_L$ of length $L$ and of height $\frac{9L}{\sqrt{\gamma}}$ that is crossed by a connected component of $E\cap \mathcal{R}_L$ in the length direction by joining the two centers of the sides of the rectangle (note that the rectangle does not need to be aligned with some specific axes and can be freely rotated).
    \item Two straight runs of $E$ are \textit{nested} if one of the rectangles can be included in the other rectangle.
    \item For $k_0\in \mathbb{N}$, straight runs of $E$ are are said to be \textit{$(\gamma, k_0)$-sparse} in $\lambda \mathcal{R}$ if there does not exist any $n\geq 0$ together with a sequence $1\leq k_1<\dots < k_n\leq k_{\max}$ with $n\geq \frac{1}{2}\max(k_n, k_0)$ such that $\lambda \mathcal{R}$ contains a sequence of nested straight runs of $E$ at scales $L_{k_1}>L_{k_2}>\dots >L_{k_n}.$
\end{itemize}
\end{definition}
We comment on the above definition and we try to motivate it. The idea of Aizenman and Burchard is that a curve that does not present many straight runs is bound to take many detours at many scales hence generating a fractal behavior. If one think of the famous Koch snowflake, we start from a straight unit length segment (that obviously presents a straight run at scale $1$). We cut this segment into three parts and replace the middle one with two bends of length $1/3$. Because of this, the first iteration no longer presents a straight run at scale $1$, however each of the four segments of length $1/3$ presents a straight runs at scale $1/3$. By iterating this procedure we see that ultimately our Koch snowflake will contort a lot and will avoid doing straight runs. Of course when dealing with a general curve (that typically comes from a random set) it is hopeless to hope that straight runs will be avoided at each scale. Thus, the notion of \textit{sparse straight runs} introduced by Aizenman and Burchard is a good way of handling this difficulty. Their strategy can be divided into two parts, the first part is deterministic and consist in saying that if straight runs of $E$ are \textit{sparse} then a curve drawn in $E$ must contort a lot, the second part is probabilistic and aims to control the probability of having sparse straight runs of $E$. In this section we mostly focus on the first part and we begin with the following lemma that comes from \cite{AB98}.
\begin{lemma}[Lemma 5.2 in \cite{AB98}]
\label{lemma:1}
Let $(m,\gamma, s)$ be a renormalization triplet, there exists a sequence $(\Gamma_k)_{0\leq k \leq k_{\text{max}}}$ of collections of segments of the curve $\mathcal{C}$ such that:
\begin{itemize}
    \item for all $k$, each segment $\eta \in \Gamma_k$ is of diameter at least $L_k$,
    \item for all $k$, any two segments $\eta_1,\eta_2\in \Gamma_k$ are at distance at least $\varepsilon L_k$ where $\varepsilon=\frac{\gamma}{m}-1$,
    \item for all $k\geq 1$, any segment $\eta$ of $\Gamma_k$ is included in a segment $\eta'\in \Gamma_{k-1}$ (we say that $\eta$ is a child of $\eta'$),
    \item for all $k\leq k_{\text{max}}-1$, any segment $\eta\in \Gamma_k$ has at least $m$ children. Moreover if there is no straight run of $\eta$ at scale $L_k$ then $\eta$ has at least $(m+1)$ children,
    \item we can assume that $\Gamma_0$ contains only one initial segment.
\end{itemize}
\end{lemma}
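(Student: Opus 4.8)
The plan is to build the families $(\Gamma_k)_{0\le k\le k_{\max}}$ by downward induction on the scale, starting from $\Gamma_0:=\{\mathcal{C}\}$. This single segment has diameter at least $\lambda=L_0$ by hypothesis, so the first two bullets hold trivially (the separation bullet being vacuous) and the last bullet by construction. The entire statement then reduces to one \emph{branching step}: given a continuous rectifiable sub-arc $\eta$ of $\mathcal{C}$ of diameter at least $L_k$, with $k\le k_{\max}-1$, produce inside $\eta$ at least $m$ sub-arcs, pairwise at distance at least $\varepsilon L_{k+1}$ and each of diameter at least $L_{k+1}$, and at least $m+1$ such sub-arcs when $\eta$ has no straight run at scale $L_k$. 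Granting this, one takes $\Gamma_{k+1}$ to be the union over $\eta\in\Gamma_k$ of the sub-arcs produced for $\eta$: the nesting bullet is then immediate, each $\Gamma_k$ is nonempty since $m\ge 1$, and two sub-arcs coming from distinct parents $\eta\ne\eta'$ of $\Gamma_k$ lie inside $\eta$ and $\eta'$, which are $\varepsilon L_k$-separated by induction, hence they are at distance at least $\varepsilon L_k\ge\varepsilon L_{k+1}$ since $\gamma>1$; so the separation bullet survives inside $\Gamma_{k+1}$.

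The arithmetic that makes the branching step work is the identity hidden in the definition of a renormalization triplet, $\gamma=m(1+\varepsilon)$, i.e. $L_k=mL_{k+1}+m\varepsilon L_{k+1}$: a row of $m$ arcs of diameter $L_{k+1}$ separated by $m$ gaps of width $\varepsilon L_{k+1}$ occupies "linear extent" exactly $L_k$. Concretely I would pick $p,q\in\eta$ realizing $\diam(\eta)=:d\ge L_k$ and let $\zeta\subseteq\eta$ be the sub-arc joining them, so $\zeta$ has $p$ as an endpoint and reaches distance $d$ from $p$. For $i=1,\dots,m$ put $\rho_i:=(i-1)L_k/m$ and let $\eta^{(i)}$ be the portion of $\zeta$ lying between its last passage through the sphere $\{|x-p|=\rho_i\}$ before reaching distance $\rho_i+L_{k+1}$ from $p$, and its first passage through $\{|x-p|=\rho_i+L_{k+1}\}$. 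Intermediate-value arguments show $\eta^{(i)}$ is well defined (since $\rho_i+L_{k+1}\le L_k-\varepsilon L_{k+1}<L_k\le d$), lies in the closed annulus $\{\rho_i\le|x-p|\le\rho_i+L_{k+1}\}$, meets both bounding spheres — hence has diameter at least $L_{k+1}$ — and that the $\eta^{(i)}$ are pairwise disjoint sub-arcs of $\zeta$ ordered along it; moreover for $i<j$, $\mathrm{dist}(\eta^{(i)},\eta^{(j)})\ge\rho_j-(\rho_i+L_{k+1})\ge L_k/m-L_{k+1}=\varepsilon L_{k+1}$. This produces the $m$ required children.

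For the extra child when $\eta$ has no straight run at scale $L_k$: if $d\ge L_k+L_{k+1}$ I would just run the same construction with $\rho_1,\dots,\rho_{m+1}$, which still fits since $\rho_{m+1}+L_{k+1}=L_k+L_{k+1}\le d$. Otherwise $d\in[L_k,L_k+L_{k+1})$, and here the absence of a straight run produces the missing room. Indeed $\zeta$ joins two points at distance at least $L_k$ within the lens $\bar B(p,d)\cap\bar B(q,d)$; if it stayed within transverse distance $<\tfrac{9L_k}{2\sqrt\gamma}$ of the line through $p$ and $q$, the lens constraint would confine it to a rectangle of height $\tfrac{9L_k}{\sqrt\gamma}$ crossed in the length direction over a distance at least $L_k$, so $E$ would contain a straight run at scale $L_k$ — contradiction. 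Hence $\zeta$ reaches transverse distance at least $h:=\tfrac{9L_k}{2\sqrt\gamma}$ from that line. Since $\tfrac{L_k}{\sqrt\gamma}=\sqrt{L_kL_{k+1}}$ is the geometric mean of two consecutive scales, this transverse excursion of amplitude $\asymp\sqrt{L_kL_{k+1}}$ over a longitudinal span $\asymp L_k$ generates an excess over the chord $pq$ of order $h^2/d\asymp L_{k+1}$ — and, thanks to the crude constant $9$, comfortably more than $L_{k+1}$. I would use this to extract, near the tip of the excursion, one further sub-arc of $\eta$ of diameter at least $L_{k+1}$; being at transverse distance of order $h\gg\varepsilon L_{k+1}$ from the strand of $\zeta$ that hugs the chord, it can be inserted into the packing around $p$ while staying at distance at least $\varepsilon L_{k+1}$ from all $m$ previous children — and this is where the sharp inequality $\gamma^2<m(m+1)$, equivalently $m\varepsilon(2+\varepsilon)<1$, is needed, to pin down how the radii $\rho_i$ must be shifted. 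This yields the $m+1$ children.

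The induction bookkeeping and the intermediate-value checks of the $m$-child construction are routine. The genuine obstacle is the last paragraph: converting "no straight run at scale $L_k$" into one \emph{well-separated} additional sub-arc of $\eta$ — deciding which portion of the curve to take and juggling the packing radii so that both the exact separation constant $\varepsilon L_{k+1}$ and the containment inside $\eta$ survive, with the precise numerology of $1\le m<\gamma<\gamma^s<\sqrt{m(m+1)}$ and the aspect ratio $9/\sqrt\gamma$. This is the delicate combinatorial–geometric analysis of \cite{AB98}, which I would follow essentially verbatim, the only change being cosmetic (continuous rectifiable arcs in place of lattice paths).
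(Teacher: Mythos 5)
Your proposal is correct and follows the same route as the paper, which for this lemma simply defers to the algorithmic construction of Lemma 5.2 in \cite{AB98}: your annulus-packing step for the $m$ children (with the radial gaps $\varepsilon L_{k+1}$ coming from $L_k/m=(1+\varepsilon)L_{k+1}$) and your reduction of the extra child to the detour forced by the absence of a straight run are exactly that construction. You actually supply more detail than the paper does, and you correctly identify the only genuinely delicate point (inserting the $(m{+}1)$-st segment while preserving the $\varepsilon L_{k+1}$ separation), for which deferring to \cite{AB98} is precisely what the paper itself does.
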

We refer the reader to \cite{AB98} for the proof of the above lemma as it is a purely algorithmic construction. We briefly comment on the interpretation of Lemma \ref{lemma:1} for our purpose. At each step any segment of the curve of diameter $L_k$ is split via an algorithmic procedure into at least $m$ segments of diameter $L_{k+1}=\frac{L_k}{\gamma}$. Moreover, when the segment contorts and avoid doing a straight runs, then this segment will be divided into at least $m+1$ smaller segments. If straight runs of $E$ are sparse in $\mathcal{R}$ then this last case will happened at least half of the times when considering a trajectory from an ancestor to its descendants, hence the average number of children of a segment should be at least $\sqrt{m(m+1)}$. This will be the key to prove that the curve $\mathcal{C}$ presents a fractal behavior since this average number of children is bigger than the scale $\gamma$ of renormalization.
We now present the tool Aizenman and Burchard introduced to measure the tortuosity of a curve. It is the notion of the \textit{energy} of the curve.
\begin{definition}
\label{def:emu}
Let $(m,\gamma,s)$ be a renormalization triplet. Let $\mu$ be a probability measure supported on $\mathcal{C}$. The \textit{energy} $E_s(\mu)$ is defined as
\begin{equation}
\label{eq:defemu}
    E_s(\mu):= \lambda^s\iint_{(x,y)\in \mathcal{C}\times \mathcal{C}}\frac{\mu(dx)\mu(dy)}{\max(|x-y|^s,1)}.
\end{equation}
\end{definition}
The relation between the tortuosity of the curve and the energy $E_s(\mu)$ can be seen in the following lemma.
\begin{lemma}[Lemma 5.3 in \cite{AB98}]
\label{lemma:h}
Let $(m,\gamma,s)$ be a renormalization triplet. Let $(C_i)_{1\leq i \leq N}$ be a finite collection of subsets of $\R^2$ such that
\begin{itemize}
    \item The $C_i$ are disjoint.
    \item For all $i$, $\text{diam}(C_i)\geq 1.$
    \item $\mathcal{C}=\bigsqcup_{i=1}^N C_i.$
\end{itemize}
Let $\mu$ be a probability measure supported on $\mathcal{C}.$
We have
\begin{equation}
    E_s(\mu)\sum_{i=1}^N \frac{\emph{diam}(C_i)^s}{\lambda^s} \geq 1,
\end{equation}
\end{lemma}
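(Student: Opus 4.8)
The plan is to deduce the inequality from two elementary ingredients: a lower bound on $E_s(\mu)$ obtained by keeping only the ``diagonal'' part of the double integral, and the Cauchy--Schwarz inequality.

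First I would discard all the cross terms in the integral defining $E_s(\mu)$. Since the sets $C_i$ are pairwise disjoint, the products $C_i\times C_i$ are pairwise disjoint subsets of $\mathcal{C}\times\mathcal{C}$, so
$$E_s(\mu)\ \ge\ \lambda^s\sum_{i=1}^N\iint_{C_i\times C_i}\frac{\mu(dx)\,\mu(dy)}{\max(|x-y|^s,1)}.$$
For $(x,y)\in C_i\times C_i$ one has $|x-y|\le\diam(C_i)$, and since $\diam(C_i)\ge 1$ this gives $\max(|x-y|^s,1)\le\diam(C_i)^s$. Hence each diagonal block contributes at least $\lambda^s\mu(C_i)^2/\diam(C_i)^s$, and we obtain
$$E_s(\mu)\ \ge\ \sum_{i=1}^N\frac{\lambda^s\,\mu(C_i)^2}{\diam(C_i)^s}.$$

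Next, because $\mu$ is a probability measure and $\mathcal{C}=\bigsqcup_{i=1}^N C_i$, we have $\sum_{i=1}^N\mu(C_i)=\mu(\mathcal{C})=1$. Writing $\mu(C_i)=\bigl(\lambda^{s/2}\mu(C_i)/\diam(C_i)^{s/2}\bigr)\cdot\bigl(\diam(C_i)^{s/2}/\lambda^{s/2}\bigr)$ and applying the Cauchy--Schwarz inequality yields
$$1\ =\ \Bigl(\sum_{i=1}^N\mu(C_i)\Bigr)^2\ \le\ \Bigl(\sum_{i=1}^N\frac{\lambda^s\mu(C_i)^2}{\diam(C_i)^s}\Bigr)\Bigl(\sum_{i=1}^N\frac{\diam(C_i)^s}{\lambda^s}\Bigr)\ \le\ E_s(\mu)\sum_{i=1}^N\frac{\diam(C_i)^s}{\lambda^s},$$
which is exactly the claimed inequality. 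I do not expect a real obstacle here; the only points deserving a line of care are the measurability of the sets $C_i$ (so that $\mu(C_i)$ makes sense and the masses add up to $\mu(\mathcal{C})=1$) and the observation that the hypothesis $\diam(C_i)\ge 1$ is precisely what allows replacing $\max(|x-y|^s,1)$ by $\diam(C_i)^s$ on each diagonal block — this is presumably the reason that hypothesis appears in the statement.
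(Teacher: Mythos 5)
Your proof is correct and follows essentially the same route as the paper's: restrict the energy integral to the diagonal blocks $C_i\times C_i$, bound $\max(|x-y|^s,1)$ by $\mathrm{diam}(C_i)^s$ using the hypothesis $\mathrm{diam}(C_i)\geq 1$, and conclude by Cauchy--Schwarz applied to $\sum_i \mu(C_i)=1$. No issues.
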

As we slightly modified the definition of the energy from \cite{AB98} we provide the adapted proof of this lemma here for the convenience of the reader.
\begin{proof}[Proof of Lemma \ref{lemma:h}]
Since $\mathcal{C}$ is partitioned by the $C_i$ we write
\begin{align*}
    E_s(\mu)& \geq \sum_{i=1}^N \lambda^s\iint_{(x,y)\in C_i\times C_i}\frac{\mu(dx)\mu(dy)}{\max(|x-y|^s,1)} \\
    &\geq \lambda^s\sum_{i=1}^N \iint_{(x,y)\in C_i\times C_i} \frac{\mu(dx)\mu(dy)}{\text{diam}(C_i)^s} \\
    &= \lambda^s\sum_{i=1}^N \frac{\mu(C_i)^2}{\text{diam}(C_i)^s}.
\end{align*}
Since $\mu$ is a probability measure supported on $\mathcal{C}$ (partitioned by the $C_i$) we have
$$1=\sum_{i=1}^N \mu(C_i).$$
Thus applying the Cauchy-Schwarz inequality we get
\begin{align*}
    1 = \left(\sum_{i=1}^N \mu(C_i)\right)^2 \leq \left(\sum_{i=1}^N \frac{\lambda^s\mu(C_i)^2}{\text{diam}(C_i)^s}\right)\left(\sum_{i=1}^N \frac{\text{diam}(C_i)^s}{\lambda^s}\right) \leq E_s(\mu) \sum_{i=1}^N \frac{\text{diam}(C_i)^s}{\lambda^s}.
\end{align*}
This is precisely the conclusion.
\end{proof}
Since we are interested in the length of the curve $\mathcal{C}$ we now relate the energy $E_s(\mu)$ to this length.
\begin{corollary}
\label{cor:1}
Let $(m,\gamma,s)$ be a renormalization triplet, for any probability measure $\mu$ supported on $\mathcal{C}$ we have
\begin{equation}
    \emph{\text{length}}(\mathcal{C})\geq \frac{\lambda^s}{E_s(\mu)}-2^s
\end{equation}
\end{corollary}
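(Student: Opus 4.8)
The plan is to construct, independently of $\mu$, a partition of $\mathcal{C}$ whose pieces are arcs of diameter exactly $1$ apart from a single leftover arc of diameter at most $2$, and then feed this partition into Lemma \ref{lemma:h}; the additive constant $2^s$ in the statement is exactly the contribution of that leftover arc.

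First I would dispose of the case $\text{diam}(\mathcal{C})\le 1$: then $|x-y|\le 1$ for all $x,y\in\mathcal{C}$, so $E_s(\mu)=\lambda^s$ and the right-hand side equals $1-2^s<0\le\text{length}(\mathcal{C})$. (In the intended application $\text{diam}(\mathcal{C})\ge\lambda>1$, so this case is vacuous, but it costs nothing to record it.) Assume henceforth $\text{diam}(\mathcal{C})>1$ and parametrize $\mathcal{C}$ by arc length, $\gamma:[0,\mathcal{L}]\to\R^2$ with $\mathcal{L}=\text{length}(\mathcal{C})$; then $\gamma$ is $1$-Lipschitz, so $|\gamma(a)-\gamma(b)|\le\text{diam}(\gamma([a,b]))\le b-a$ on every subinterval. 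I would then pick cut points greedily: set $u_0=0$, and given $u_{i-1}<\mathcal{L}$ put $u_i=\mathcal{L}$ if $\text{diam}(\gamma([u_{i-1},\mathcal{L}]))\le 1$, and otherwise $u_i=\sup\{u:\text{diam}(\gamma([u_{i-1},u]))\le 1\}$. Since $u\mapsto\text{diam}(\gamma([u_{i-1},u]))$ is continuous and nondecreasing, in the second case $\text{diam}(\gamma([u_{i-1},u_i]))=1$, hence that arc has length $\ge 1$ and $u_i-u_{i-1}\ge 1$; so the process halts after finitely many steps at $u_N=\mathcal{L}$, and $N\ge 2$ because $\text{diam}(\mathcal{C})>1$. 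Writing $A_i=\gamma([u_{i-1},u_i])$, the arcs $A_1,\dots,A_{N-1}$ satisfy $\text{diam}(A_i)=1$ and $\text{length}(A_i)\ge 1$, while $\text{diam}(A_N)\le 1$. To meet the hypothesis $\text{diam}\ge 1$ of Lemma \ref{lemma:h} I absorb the last (possibly tiny) arc into its predecessor: $C_i=A_i$ for $1\le i\le N-2$ and $C_{N-1}=A_{N-1}\cup A_N$. Then $\text{diam}(C_i)=1$ for $i\le N-2$, and since $A_{N-1}$ and $A_N$ share the point $\gamma(u_{N-1})$ one has $1\le\text{diam}(C_{N-1})\le\text{diam}(A_{N-1})+\text{diam}(A_N)\le 2$.

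Now I would apply Lemma \ref{lemma:h} to the partition $\mathcal{C}=\bigsqcup_{i=1}^{N-1}C_i$ and to $\mu$, which gives
\[
\frac{\lambda^s}{E_s(\mu)}\le\sum_{i=1}^{N-1}\text{diam}(C_i)^s=(N-2)+\text{diam}(C_{N-1})^s\le(N-2)+2^s .
\]
On the other hand the arcs $A_1,\dots,A_{N-1}$ each have length $\ge 1$ and $\sum_{i=1}^{N-1}\text{length}(A_i)\le\sum_{i=1}^{N}\text{length}(A_i)=\mathcal{L}$, so $N-1\le\mathcal{L}$ and in particular $N-2\le\text{length}(\mathcal{C})$. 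Substituting into the display yields $\frac{\lambda^s}{E_s(\mu)}\le\text{length}(\mathcal{C})+2^s$, which rearranges to the claim.

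The one point needing genuine care --- and the reason for the precise greedy construction --- is that the generic pieces must have diameter \emph{exactly} $1$, so that each contributes $1=\text{diam}(C_i)^s\le\text{length}(A_i)$; a fixed larger diameter would produce a multiplicative constant in front of $\lambda^s/E_s(\mu)$ instead of the clean additive $-2^s$. A secondary, routine technicality is that when $\mathcal{C}$ is self-intersecting the sets $C_i$ need not be disjoint as subsets of $\R^2$; this is handled by running the same Cauchy--Schwarz computation as in the proof of Lemma \ref{lemma:h} with $\mu$ replaced by its pushforward to $[0,\mathcal{L}]$ under the first-passage map $p\mapsto\min\{t:\gamma(t)=p\}$ and with the genuinely disjoint parameter intervals $[u_{i-1},u_i)$ in place of the $C_i$, the relevant quantities $\text{diam}(\gamma([u_{i-1},u_i]))$ being unchanged.
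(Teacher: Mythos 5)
Your proof is correct and follows essentially the same strategy as the paper's: cut the curve into pieces of diameter of order $1$, apply Lemma \ref{lemma:h} to the resulting partition, and bound the number of pieces by the total length. Two of your technical choices are in fact more careful than the paper's own. The paper cuts at the first time the curve reaches Euclidean distance $1$ from the previous cut point, so each piece is only contained in a unit ball around that point and its diameter lies in $[1,2)$ rather than equalling $1$ as asserted there; taken literally this would introduce a multiplicative factor $2^{s-1}$ in front of $\lambda^s/E_s(\mu)$ instead of the clean additive $-2^s$. Your greedy cut by diameter, $u_i=\sup\{u:\mathrm{diam}(\gamma([u_{i-1},u]))\le 1\}$, forces $\mathrm{diam}(A_i)=1$ exactly and so recovers the stated constant. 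Similarly, the paper handles self-intersections by passing without loss of generality to a simple sub-curve, which is slightly delicate since $\mu$ need not be supported on that sub-curve, whereas your pushforward of $\mu$ to the parameter interval under the first-passage map sidesteps the disjointness issue entirely. Either way the corollary is established in the form needed for Theorem \ref{thm:principal1}, but your write-up is the more airtight of the two.
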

\begin{proof}
Without loss of generality, considering a sub-curve of $\mathcal{C}$ of same diameter but that does not self intersect, we may assume that we have $\gamma : [0,1]\to \R^2$ a continuous and injective parametrization of $\mathcal{C}$. We recursively define, $t_0=0, x_0 =\gamma(t_0)$, and for $n\geq 0$, if $t_n$ is defined we define $t_{n+1}$ as follows. Either $x_n = \gamma(t_n)$ is at distance less than $1$ from $\gamma(1)$, we then set $t_{n+1}=1$, $x_{n+1}=\gamma(1)$ and we end the procedure. Otherwise, we define $t_{n+1} := \text{min}(t>t_n\ |\ d(\gamma(t),\gamma(t_n))=1\}$.
Applying this procedure yields a finite sequence $(t_i)_{0\leq i \leq N+1}$ with $0=t_0<t_1<\dots<t_{N}<t_{N+1}=1.$
We define for $1\leq i \leq N-1$, $C_i = \gamma([t_{i-1},t_i[).$
We also set $C_N := \gamma([t_{N-1},1]).$
By construction we see that the $C_i$ are disjoint, cover $\mathcal{C}$, moreover for all $1\leq i\leq N-1$ we have $\text{diam}(C_i)=1$ and $\text{diam}(C_N) \in [1,2[.$ Hence we can apply Lemma \ref{lemma:h} to see that for any probability measure $\mu$ supported on $\mathcal{C}$ we have
$$\sum_{i=1}^N \text{diam}(C_i)^s \geq \frac{\lambda^s}{E_s(\mu)}.$$
Now note that for all $1\leq i \leq N-1$ we have $\text{diam}(C_i)^s = \text{diam}(C_i)=1$, and we always have $\text{diam}(C_i)\leq \text{length}(\gamma([t_{i-1},t_i[))$. Moreover using the fact that $\text{diam}(C_N)\leq 2$ we get
\begin{align*}
    \text{length}(\mathcal{C}) = \sum_{i=1}^N \text{length}(C_i) \geq \sum_{i=1}^{N}\text{diam}(C_i) \geq -2^s +\sum_{i=1}^N \text{diam}(C_i)^s \geq \frac{\lambda^s}{E_s(\mu)}-2^s.
\end{align*}
\end{proof}
By the previous corollary, in order to show that $\mathcal{C}$ has a length of order at least $\lambda^s$ one need to find $\mu$ a probability measure on $\mathcal{C}$ such that $E_s(\mu)$ is upper bounded. This is what is done in the following proposition.
\begin{proposition}[see Lemma 5.4 in \cite{AB98}]
\label{prop:low_energy}
Let $(m,\gamma,s)$ be a renormalization triplet. Assume that straight runs of $E$ are $(\gamma,k_0)$-sparse in $\lambda\mathcal{R}$, then there exists a probability measure $\mu$ supported on $\mathcal{C}$ with low energy, that is $\mu$ satisfies
$$E_s(\mu) \leq \frac{1}{\varepsilon^s}\left(\gamma^{s(k_0+1)}+\frac{\beta}{1-\frac{\gamma^s}{\beta}}\right),$$
where $\beta$ is defined as $\beta := \sqrt{m(m+1)}$ (note that by definition of $s$ we have $\gamma^s < \beta$).
\end{proposition}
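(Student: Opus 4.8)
The plan is to follow the proof of Lemma~5.4 of \cite{AB98}, specialized to our setting. The starting point is Lemma~\ref{lemma:1}, which provides a rooted tree of sub-segments $(\Gamma_k)_{0\le k\le k_{\max}}$ of $\mathcal{C}$: the root is the unique element of $\Gamma_0$; each $\eta\in\Gamma_k$ with $k\le k_{\max}-1$ has at least $m$ children, and at least $m+1$ of them whenever $\eta$ performs no straight run of $E$ at scale $L_k$; distinct elements of $\Gamma_k$ lie at Euclidean distance $\ge\varepsilon L_k$, and each has diameter $\ge L_k$. Out of this tree I would build a probability measure $\mu$ on $\mathcal{C}$ by mass transport: give mass $1$ to the root, recursively distribute the mass of each segment equally among its children, and on each terminal segment $\eta\in\Gamma_{k_{\max}}$ spread the resulting mass $\mu(\eta)$ as normalized arclength. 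Since every segment of every $\Gamma_k$ is contained in $\mathcal{C}$, this $\mu$ is supported on $\mathcal{C}$, and for $\eta\in\Gamma_k$ the mass $\mu(\eta)$ equals the product of the reciprocals of the numbers of children along the ancestral path of $\eta$.

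The crucial point is to turn the sparsity hypothesis into the decay estimate $\mu(\eta)\le\beta^{-k}$ for every $\eta\in\Gamma_k$ with $k\ge k_0$, where $\beta=\sqrt{m(m+1)}$. Fixing such an $\eta$ and its ancestral path $\eta=\eta^{(k)}\subset\eta^{(k-1)}\subset\cdots\subset\eta^{(0)}$, consider the set of levels $j\in\{1,\dots,k\}$ at which $\eta^{(j)}$ performs a straight run of $E$ at scale $L_j$. Because the $\eta^{(j)}$ are nested, one argues as in \cite{AB98} that the corresponding straight-run rectangles may be taken nested as well; then the $(\gamma,k_0)$-sparsity of straight runs of $E$ in $\lambda\mathcal{R}$ forces the number of such $j$ to be strictly less than $\tfrac12\max(k,k_0)=\tfrac k2$. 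Hence at least $\lceil k/2\rceil$ of the ancestors $\eta^{(j)}$ have at least $m+1$ children and the remaining ones at least $m$, so $\mu(\eta)\le(m+1)^{-k/2}m^{-k/2}=\beta^{-k}$. For $k<k_0$ I would keep only the trivial bound $\mu(\eta)\le1$.

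It then remains to bound $E_s(\mu)$ by decomposing the double integral according to the last level at which the two points remain in a common segment: for $x,y\in\mathcal{C}$ let $\ell(x,y)$ be the largest $k$ with $x,y$ in a common element of $\Gamma_k$. If $\ell(x,y)=k<k_{\max}$, then $x$ and $y$ lie in distinct children of a common $\Gamma_k$-segment, so $|x-y|\ge\varepsilon L_{k+1}\ge1$ (the last inequality because $k+1\le k_{\max}$), whence $\max(|x-y|^s,1)=|x-y|^s\ge\varepsilon^s\lambda^s\gamma^{-s(k+1)}$, while the $(\mu\otimes\mu)$-mass of $\{\ell=k\}$ is at most $\sum_{\eta\in\Gamma_k}\mu(\eta)^2\le\max_{\eta\in\Gamma_k}\mu(\eta)$. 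I would split the resulting sum into three parts. First, all pairs with $\ell(x,y)<k_0$: they have $|x-y|\ge\varepsilon L_{k_0}$ and total mass $\le1$, contributing at most $\gamma^{sk_0}/\varepsilon^s\le\gamma^{s(k_0+1)}/\varepsilon^s$. Second, the pairs with $k_0\le\ell(x,y)\le k_{\max}-1$: using $\mu(\eta)\le\beta^{-k}$ this contributes at most $\varepsilon^{-s}\sum_{k\ge k_0}\gamma^{s(k+1)}\beta^{-k}$, a geometric series that converges precisely because $\gamma^s<\beta$ and is at most $\varepsilon^{-s}\gamma^s/(1-\gamma^s/\beta)$. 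Third, the pairs inside a single terminal segment: with the crude bound $\max(|x-y|^s,1)\ge1$ they contribute at most $\lambda^s\max_{\eta\in\Gamma_{k_{\max}}}\mu(\eta)\le\lambda^s\beta^{-k_{\max}}$, which is $\le\gamma^s/\varepsilon^s$ since $\lambda<\gamma^{k_{\max}+1}/\varepsilon$ by definition of $k_{\max}$. (When $k_0>k_{\max}$ the tree is too shallow and one gets the bound directly from $E_s(\mu)\le\lambda^s$.) Adding the three contributions, the inequality $\gamma^s/(1-\gamma^s/\beta)+\gamma^s\le\beta/(1-\gamma^s/\beta)$ — equivalent to $(\beta-\gamma^s)^2\ge0$ — yields exactly $E_s(\mu)\le\varepsilon^{-s}\bigl(\gamma^{s(k_0+1)}+\beta/(1-\gamma^s/\beta)\bigr)$.

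I expect the main obstacle to be the second step: cleanly extracting the average-branching bound $\mu(\eta)\le\beta^{-k}$ from the $(\gamma,k_0)$-sparsity hypothesis, and in particular justifying that the straight runs occurring along any ancestral path form a nested family to which the definition of $(\gamma,k_0)$-sparse straight runs applies. This is the combinatorial heart of the argument in \cite{AB98}; once it is in place, and given Lemma~\ref{lemma:1}, the rest is the bookkeeping above, with only mild additional care needed for the boundary levels $k<k_0$ and the terminal level $k_{\max}$.
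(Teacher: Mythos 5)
Your proposal is correct and follows essentially the same route as the paper's proof: the same tree-based measure from Lemma \ref{lemma:1}, the same sparsity-to-branching bound $\mu(\eta)\le\beta^{-k}$ (the paper's Claim \ref{claim:1}), and the same decomposition of the energy by the generation at which two points separate, with only cosmetic differences (arclength versus atomic mass on terminal segments, and a slightly different grouping of the final geometric sums). No substantive gap.
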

The proof of Proposition \ref{prop:low_energy} is completely similar to the one in \cite{AB98}. However, since we modified the definition of the energy we provide the adapted proof for the convenience of the reader.
\begin{proof}
Define a measure $\mu$ on $\mathcal{C}$ as follows, the measure gives weight $1$ to the unique segment in $\Gamma_0$, then each segment divides its weight evenly among its children until we reach the segments in $\Gamma_{k_{\text{max}}}.$ More formally, given an integer $0\leq k \leq k_{\text{max}}$ and some $i< k$ one can define $n_i(\eta)$ as the number of children of the unique segment $\eta'\in \Gamma_i$ that contains $\eta$. For any segment $\eta \in \Gamma_k$ we thus define
\begin{equation}
    \mu(\eta) = \prod_{i=0}^{k-1}\frac{1}{n_i(\eta)}.
\end{equation}
For convenience, we arbitrarily choose one point in each $\eta\in \Gamma_{k_{\text{max}}}$ and decide that the measure $\mu$ is supported on the finite collection of those points (for instance we take the leftmost upmost point of each $\eta\in \Gamma_{k_{\text{max}}}$). We denote $\tilde{\mathcal{C}}$ the collection of those points, so that $\mu$ is supported on $\tilde{\mathcal{C}}$.

The fact that straight runs of $E$ are $(\gamma,k_0)$-sparse in $\lambda \mathcal{R}$ implies the following claim.
\begin{claim}
\label{claim:1}
For any $x\in \tilde{\mathcal{C}}$, for any $k$ such that $k_0\leq k \leq k_{\text{max}}$, we have
\begin{equation}
    \prod_{i=0}^{k-1} n_i(x) \geq \beta^k,
\end{equation}
where we recall that $\beta =\sqrt{m(m+1)}$.
\end{claim}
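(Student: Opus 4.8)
The plan is to fix $x \in \tilde{\mathcal{C}}$ and $k$ with $k_0 \leq k \leq k_{\text{max}}$, and to count, along the ancestral chain of segments $\eta_0 \supset \eta_1 \supset \dots \supset \eta_k$ (with $\eta_j \in \Gamma_j$ the unique segment of $\Gamma_j$ containing $x$), how many of the transitions from level $j$ to level $j+1$ must be ``splitting'' transitions, meaning $n_j(x) \geq m+1$ rather than merely $n_j(x) \geq m$. By Lemma \ref{lemma:1}, the only way to have $n_j(x) = m$ (the minimum) is for $\eta_j$ to admit a straight run at scale $L_j$. So I would let $S \subseteq \{0, 1, \dots, k-1\}$ be the set of indices $j$ at which $\eta_j$ admits a straight run at scale $L_j$ (so that potentially $n_j(x) = m$), and $G = \{0,\dots,k-1\}\setminus S$ be the ``good'' indices where necessarily $n_j(x) \geq m+1$.

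The key step is to show $|G| \geq k/2$, i.e. $|S| \leq k/2$. Here is where the $(\gamma, k_0)$-sparseness hypothesis enters. Suppose toward a contradiction that $|S| > k/2$. Writing $S = \{k_1 < k_2 < \dots < k_n\}$ with $n = |S| > k/2$, each $\eta_{k_i}$ admits a straight run at scale $L_{k_i}$; since the segments $\eta_{k_1} \supset \eta_{k_2} \supset \dots \supset \eta_{k_n}$ are nested (each is an ancestor of the next by Lemma \ref{lemma:1}) and each straight-run rectangle can be taken to lie near its segment, the associated straight runs form a nested sequence at scales $L_{k_1} > L_{k_2} > \dots > L_{k_n}$ inside $\lambda \mathcal{R}$. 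Now $k_n \leq k-1 < k$ and $n > k/2 > k_n / 2$, and also $n > k/2 \geq k_0/2$ since $k \geq k_0$; hence $n \geq \frac{1}{2}\max(k_n, k_0)$, contradicting $(\gamma, k_0)$-sparseness. (One minor bookkeeping point: the indices $k_i$ need to satisfy $1 \leq k_1$; if $0 \in S$ one simply discards it, losing at most one element, which is absorbed by choosing the contradiction threshold with a little room, or by noting $n-1 \geq \frac12 \max(k_n,k_0)$ still suffices when $k$ is large enough — I would phrase the inequality to avoid the off-by-one, e.g. by working with $|S \cap \{1,\dots,k-1\}|$ throughout.)

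Once $|G| \geq k/2$ is established, the conclusion is immediate:
\begin{equation*}
\prod_{i=0}^{k-1} n_i(x) \;\geq\; m^{|S|}(m+1)^{|G|} \;=\; m^{k}\left(\frac{m+1}{m}\right)^{|G|} \;\geq\; m^{k}\left(\frac{m+1}{m}\right)^{k/2} \;=\; \bigl(m(m+1)\bigr)^{k/2} \;=\; \beta^k,
\end{equation*}
using $m+1 \geq m$ so that replacing $|G|$ by the smaller quantity $k/2$ only decreases the bound. I expect the main obstacle to be the careful matching of the combinatorial condition in the definition of $(\gamma,k_0)$-sparseness — in particular checking that nested \emph{segments} of the curve genuinely produce nested \emph{straight runs} in the sense of Definition \ref{def:sparse} (the rectangles of length $L_{k_i}$ and height $9L_{k_i}/\sqrt{\gamma}$ must be constructible around the segments and must nest), and handling the index ranges ($k_1 \geq 1$, $k_n \leq k_{\max}$) cleanly; the arithmetic at the end is routine.
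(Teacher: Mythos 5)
Your proof is correct and follows essentially the same route as the paper's: identify the indices at which a straight run occurs, use $(\gamma,k_0)$-sparseness to bound their number by $k/2$ via the nested-straight-runs contradiction, and conclude with $m^{|S|}(m+1)^{|G|}\geq (m(m+1))^{k/2}=\beta^k$. The off-by-one issue with the index $k_1\geq 1$ that you flag is also present (and silently glossed over) in the paper's own argument, so your handling of it is if anything slightly more careful.
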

\begin{proof}[Proof of the Claim \ref{claim:1}]
Note that $n_i(\eta)$ is always greater or equal than $m$, and is strictly greater than $m$ if there is no straight run at scale $L_i$ for the segment $\eta'\in \Gamma_i$ containing $\eta$.
By contradiction assume that for some $k_0\leq k \leq k_{\text{max}}$ we have strictly more than half of the factors in the product $\prod_{i=0}^{k-1}n_i(x)$ that are equal to $m$. That is we can find a $n>\frac{k}{2}$ and a sequence $0\leq k_1<\dots <k_n \leq k-1$, such that for all $1\leq i \leq n$ we have $n_{k_i}(x) = m$. This would imply that there exist a nested straight run on the scales $L_{k_1},\dots,L_{k_n}$ this is a contradiction with the fact that straight runs are $(\gamma,k_0)$-sparse (in fact we have $n>\frac{k}{2}$ so $n\geq \frac{1}{2}\max(k_n,k_0)$).
We now see that at least half of the factors in the product are greater or equal than $m+1$ and the other are always greater or equal than $m$. This readily yields the conclusion.
\end{proof}
We go back to the proof of Proposition \ref{prop:low_energy}. Given two points $x,y\in \tilde{\mathcal{C}}$, denote $k(x,y)$ the first generation where $x$ and $y$ were not part of the same segment. By convention if $x=y$ we arbitrarily define $k(x,y)=k_{\text{max}}+1$. By the definition of the $\Gamma_k$ we see that for $x\neq y$ we have $d(x,y) \geq \varepsilon L_{k(x,y)}$.

The energy $E_s(\mu)$ can be estimated ad follows:

\begin{align*}
    \lambda^{-s}E_s(\mu) &= \iint_{(x,y)\in \tilde{\mathcal{C}}}\frac{\mu(dx)\mu(dy)}{\max(|x-y|^s,1)} \\
    &\leq \sum_{k=1}^{k_{\text{max}}+1} \iint_{(x,y) | k(x,y)=k}\frac{\mu(dx)\mu(dy)}{\max(|x-y|^s,1)} \\
    &\leq \sum_{x\in \tilde{\mathcal{C}}}\mu(\{x\})^2+\sum_{k=1}^{k_{\text{max}}}\frac{1}{(\varepsilon L_k)^s}\iint_{(x,y) | k(x,y)=k}\mu(dx)\mu(dy).
\end{align*}
Now note, that if $x,y\in \tilde{\mathcal{C}}$ are such that $k(x,y)=k$ then $x$ and $y$ belong to the one common segment $\eta\in \Gamma_{k-1}$, this yields
$$\iint_{(x,y)|k(x,y)=k}\mu(dx)\mu(dy) \leq \sum_{\eta \in \Gamma_{k-1}}\iint_{(x,y)\in \eta\times \eta}\mu(dx)\mu(dy) \leq \sum_{\eta \in \Gamma_{k-1}}\mu(\eta)^2.$$
Also by definition of $k_{\text{max}}$, we have $L_{k_{\text{max}}+1} \leq \frac{1}{\varepsilon}$ which implies $1\leq \frac{1}{(\varepsilon L_{k_{\text{max}}+1})^s}$. We thus can write
\begin{equation}
    \label{eq:Emu1}
    \lambda^{-s}E_s(\mu)\leq \sum_{k=1}^{k_{\text{max}}+1}\frac{1}{(\varepsilon L_k)^s}\sum_{\eta \in \Gamma_{k-1}}\mu(\eta)^2.
\end{equation}
For $k> k_0$ we observe that
\begin{align*}
    \sum_{\eta\in \Gamma_k}\mu(\eta)^2 & = \sum_{\eta\in \Gamma_k}\mu(\eta) \prod_{i=0}^{k-1}n_i(\eta) \\
    &\leq \sum_{\eta\in \Gamma_k}\mu(\eta) \frac{1}{\beta^k}\\
    &= \frac{1}{\beta^k},
\end{align*}
where the first inequality is a direct application of Claim \ref{claim:1} and the last equality is due to the fact that $\mu$ is a probability measure (hence $\sum_{\eta\in \Gamma_k}\mu(\eta)=1$).
For $k\leq k_0$ we simply have
\begin{align*}
    \sum_{\eta \in \Gamma_k}\mu(\eta)^2 \leq \sum_{\eta\in \Gamma_k}\mu(\eta)\leq 1.
\end{align*}

Now we split the sum in \eqref{eq:Emu1} according to the values of $k$ and we use the fact that $L_k=\frac{\lambda}{\gamma^k}$ and the fact that $1<\gamma<\gamma^s<\beta$:
\begin{align*}
    \lambda^{-s}E_s(\mu) &\leq \sum_{k=1}^{k_0}\frac{1}{(\varepsilon L_k)^s} + \sum_{k=k_0+1}^{k_{\text{max}}}\frac{1}{(\varepsilon L_k)^s}\frac{1}{\beta^{k-1}} \\
    &\leq \frac{1}{\varepsilon^s}\frac{1}{\lambda^s}\sum_{k=1}^{k_0}(\gamma^s)^k + \frac{\beta}{\varepsilon^s}\frac{1}{\lambda^s}\sum_{k=0}^\infty \left(\frac{\gamma^s}{\beta}\right)^k \\
    &\leq \frac{1}{\varepsilon^s}\frac{1}{\lambda^s}\left(\gamma^{s(k_0+1)}+\frac{\beta}{1-\frac{\gamma^s}{\beta}}\right).
\end{align*}
This concludes
\begin{equation}
    E_s(\mu)\leq \frac{1}{\varepsilon^s}\left(\gamma^{s(k_0+1)}+\frac{\beta}{1-\frac{\gamma^s}{\beta}}\right).
\end{equation}
\end{proof}

\section{Proof of the main theorems}
\label{sec:3}
In this section we prove Theorem \ref{thm:principal1} and Theorem \ref{thm:principal2}, we separate the proof of the two theorems and present the proof of the lower bound first.
\subsection{Proof of the lower bound}
In the following we use a quasi-independence result to show that with high probability straight runs of $E$ are $(\gamma,k_0)$-sparse in $\lambda \mathcal{R}$ where $E$ denotes either $\mathcal{E}_0(f)$ or $\mathcal{Z}_0(f)$ (see Definition \ref{def:sparse}).
We begin by introducing the notion of sets that are \textit{well-separated}.
\begin{definition}
\label{def:well_separated}
Let $(A_i)_{1\leq i}$ a collection of subset of $\R^2$ we say that this collection is \textit{well-separated} if for all $i$ we have
$d(A_i, \bigcup_{j\neq i}A_j) \geq \text{diam}(A_i),$
where $d$ denotes the usual Euclidean distance.
\end{definition}
In \cite{AB98}, the following Assumption was made on the random curve $\mathcal{C}$.
\begin{assumption}
\label{a:AB}
There exists $\sigma>0$ and $0<q<1$ such that for any collection $(R_i)_{1\leq i \leq n}$ of well-separated rectangles such that each rectangle is of length as least $1$ and of aspect ratio $\sigma$,
$$\Proba{\bigcap_{i=1}^n\text{Cross}_{\mathcal{C}}(R_i)}\leq q^n,$$
where we recall Definition \ref{def:cross} of the crossing event.
\end{assumption}
It turns out that Assumption \ref{a:AB} is easy to verify for Bernoulli percolation since the configurations on disjoint set are independent. However, in the context of our smooth Gaussian percolation, such an assumption is not easy to verify. We will use a quasi-independence result for nodal lines (see \cite{RV19},\cite{BMR20}) to obtain a weaker version of Assumption \ref{a:AB} that will be enough to conclude. First we make the following definition
\begin{definition}
Let $f$ be a centered stationary Gaussian field on $\R^2$ of covariance function $\kappa : \R^2 \to \R$. We define the function $\tilde{\kappa}$ as 
$$\begin{array}{ccccc}
    \tilde{\kappa} & : & \R_+ & \to & \R  \\
     &  & x & \mapsto & \sup\{\kappa(y)\ |\ y\in \R^2, \norm{y}{}\geq x\}
\end{array}.$$
\end{definition}
\begin{remark}
For instance, for the Bargmann Fock field, since we have $\forall y\in \R^2, \ \kappa(y)=e^{-\frac{1}{2}\norm{y}{}^2}$ we deduce that $\forall x\in \R_+, \ \tilde{\kappa}(x)=e^{-\frac{x^2}{2}}.$
\end{remark}
We now provide an analogue of Assumption \ref{a:AB}.
\begin{lemma}
\label{lemma:quasi_indep}
If $q$ satisfies Assumption \ref{a:a1}, \ref{a:a2} for some $\alpha\geq 3$ and \ref{a:a4} for some $\beta>4$ the following holds. Let $\sigma\geq 1$, $l_0>0$ and $(\mathcal{R}_i)_{i\geq 1}$ be a collection of well-separated rectangles in the plane $\R^2$. We assume that $\mathcal{R}_i$ has length $\sigma l_i$ and height $l_i$. Moreover we assume that the sequence $(l_i)_{i\geq 1}$ is increasing and that $l_1\geq l_0$. Let $E$ denotes either $\mathcal{E}_0(f)$ or $\mathcal{Z}_0(f)$. There exists a constant $C>0$ (depending only on $q,\sigma$ and $l_0$) such that for all $n\geq 0$
\begin{equation}
    \label{eq:indep_el}
    \Proba{\bigcap_{i=1}^n \emph{Cross}_{E}(\mathcal{R}_i)} \leq \frac{1}{2^n}\left(1+C\sum_{k=1}^n l_k^4\tilde{\kappa}(l_k)k2^k\right),
\end{equation}
\end{lemma}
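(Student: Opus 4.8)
The plan is to run an induction on $n$, peeling off the largest rectangle $\mathcal{R}_n$ at each step and paying a quasi-independence cost that is controlled by the covariance decay $\tilde\kappa$. Write $A_i = \mathrm{Cross}_E(\mathcal{R}_i)$. The key input is a quasi-independence statement for nodal/excursion crossing events of the type established in \cite{RV19}, \cite{BMR20}: since $\mathcal{R}_n$ is at distance at least $\mathrm{diam}(\mathcal{R}_n) \asymp \sigma l_n$ from all the other rectangles (by the well-separation hypothesis, noting that $\mathcal{R}_n$ is the largest since $(l_i)$ is increasing), the event $A_n$, which depends on the field in $\mathcal{R}_n$, is nearly independent of $\bigcap_{i<n} A_i$, which depends on the field in a region at distance $\gtrsim l_n$ from $\mathcal{R}_n$. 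Quantitatively, such a result should give
\begin{equation}
\label{eq:qi-step}
\Prob\Bigl(A_n \,\Big|\, \bigcap_{i=1}^{n-1} A_i\Bigr) \leq \Proba{A_n} + C' l_n^4 \,\tilde\kappa(l_n),
\end{equation}
where the error term $C' l_n^4 \tilde\kappa(l_n)$ reflects the covariance between two regions separated by distance of order $l_n$, integrated against the relevant densities (the $l_n^4$ is the area-type factor coming from the sizes of the two rectangles). Here one uses $\Proba{A_n} \leq \tfrac12$, which is the comment after Theorem \ref{thm:rsw}: for crossings of a rectangle (in the length direction) by $\mathcal{E}_0(f)$ or $\mathcal{Z}_0(f)$ the probability is at most $\tfrac12$ by duality and $\pi/2$-invariance. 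Actually one needs $\sigma \geq 1$ so that $\mathcal{R}_i$ is a rectangle crossed in its length direction (or a square), for which the $\leq \tfrac12$ bound applies.

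Given \eqref{eq:qi-step}, the induction is straightforward. Set $p_n := \Proba{\bigcap_{i=1}^n A_i}$. Then
\begin{equation}
p_n = \Prob\Bigl(A_n \,\Big|\, \bigcap_{i=1}^{n-1} A_i\Bigr) \, p_{n-1} \leq \Bigl(\tfrac12 + C' l_n^4 \tilde\kappa(l_n)\Bigr) p_{n-1}.
\end{equation}
Iterating and using $p_{n-1} \leq 1$ on the error term (or, more carefully, keeping $p_{n-1} \leq 2^{-(n-1)}\cdot(\text{something})$ and absorbing), one expands the product $\prod_{k=1}^n \bigl(\tfrac12 + C' l_k^4\tilde\kappa(l_k)\bigr)$. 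The dominant term is $2^{-n}$; the next terms, obtained by choosing the error factor at one index $k$ and $\tfrac12$ at the others, contribute $\sum_k C' l_k^4 \tilde\kappa(l_k) \cdot 2^{-(n-1)}$; higher-order terms in the expansion are smaller. Bookkeeping the binomial-type expansion and bounding the combinatorial weight of the order-$j$ terms by $k 2^k$-type factors (this is where the $k2^k$ in the statement comes from — one is crudely bounding $\binom{k}{j}$ and the geometric tails), one arrives at
\begin{equation}
p_n \leq \frac{1}{2^n}\Bigl(1 + C\sum_{k=1}^n l_k^4 \tilde\kappa(l_k)\, k\, 2^k\Bigr),
\end{equation}
after renaming constants. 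One should double-check that the constant $C$ depends only on $q$, $\sigma$, and $l_0$: the $l_0$-dependence enters because the quasi-independence estimate \eqref{eq:qi-step} typically carries a multiplicative constant that degrades as the rectangles shrink, and $l_1 \geq l_0$ bounds this uniformly.

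The main obstacle is establishing \eqref{eq:qi-step} with the right form of the error term. The quasi-independence results in \cite{RV19}, \cite{BMR20} are usually phrased for two crossing events (or a bounded number) in well-separated boxes, with an error controlled by $\sup$ of the covariance over the relevant distances times a polynomial in the box sizes; one must verify that they apply to the conditional probability $\Prob(A_n \mid \bigcap_{i<n} A_i)$, i.e.\ that conditioning on an event depending on a far-away (but large and complicated) region still only costs the covariance between $\mathcal{R}_n$ and that region. The cleanest route is to compare the law of $f$ restricted to $\mathcal{R}_n$ under the conditioning to its unconditional law via a Cameron--Martin / Gaussian-conditioning argument, bounding the total-variation-type error by $\|\text{cross-covariance operator}\|$, which is $O(l_n^4 \tilde\kappa(l_n))$ given Assumption \ref{a:a4} with $\beta > 4$ (the exponent $4$ is precisely what makes $l_n^4 \tilde\kappa(l_n)$ summable-ish and matches the $l_k^4$ in the statement). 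Verifying that $\alpha \geq 3$ suffices for the requisite regularity of $f$ (so that crossing events are well-behaved and the conditioning manipulations are legitimate) is the other technical point to nail down, but it is exactly the hypothesis under which Theorem \ref{thm:rsw} and the cited quasi-independence results are available.
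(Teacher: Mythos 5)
Your overall architecture matches the paper's: peel off $\mathcal{R}_n$, invoke the quasi-independence results of \cite{RV19}, \cite{BMR20}, use the duality bound $\Proba{\text{Cross}_E(\mathcal{R}_n)}\leq \frac{1}{2}$ (valid since $\sigma\geq 1$, by $f\mapsto -f$ symmetry and $\pi/2$-invariance), and telescope a recursion of the form $p_n\leq \frac{1}{2}p_{n-1}+\text{error}$. However, your key quantitative input is stated in a form that the cited results do not deliver, and this is a genuine gap. Theorem 1.12 of \cite{RV19} and Theorem 2.14 of \cite{BMR20} bound the \emph{joint} probability, namely $\Proba{A_n\cap \bigcap_{i<n}A_i}\leq \Proba{A_n}\,\Proba{\bigcap_{i<n}A_i}+\text{err}$, with an additive error that is \emph{not} multiplied by $\Proba{\bigcap_{i<n}A_i}$. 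Your displayed bound on the \emph{conditional} probability $\Prob\bigl(A_n \mid \bigcap_{i<n}A_i\bigr)\leq \Proba{A_n}+C'l_n^4\tilde{\kappa}(l_n)$ is strictly stronger: obtaining it from the joint bound requires dividing the error by $p_{n-1}$, which can be as small as $2^{-(n-1)}$, so the error would be inflated by an exponential factor. Your subsequent manipulation (multiply back by $p_{n-1}$ and use $p_{n-1}\leq 1$ on the error term) happens to land on the correct additive recursion, but as written the justification is circular; you should start directly from the joint-probability form, which is what the paper does.

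The second issue is the shape of the error term. The conditioning event depends on the field near \emph{all} of the $n-1$ remaining rectangles, so the quasi-independence error is of the form $C\tilde{\kappa}(l_n)(l_n+1)^2\sum_{i=1}^{n-1}(l_i+1)^2\leq C' n\, l_n^4\tilde{\kappa}(l_n)$, using that $(l_i)$ is increasing and $l_1\geq l_0$. This extra factor $n$ — which your version omits — is precisely the origin of the $k$ in the $k2^k$ of the statement: telescoping $p_n\leq \frac{1}{2}p_{n-1}+C'n\,l_n^4\tilde{\kappa}(l_n)$ gives $2^np_n\leq 1+C'\sum_{k=1}^n k2^k l_k^4\tilde{\kappa}(l_k)$ directly. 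It does not come from bounding binomial coefficients in an expansion of a product $\prod_k\bigl(\tfrac{1}{2}+\text{err}_k\bigr)$, as you suggest; no such expansion is needed once the recursion is in additive form. With these two corrections the argument is exactly the paper's.
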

\begin{proof}
We do the proof for $E=\mathcal{E}_0(f)$ as the proof for $E=\mathcal{Z}_0(f)$ is completely similar.
For $n\geq 0$, denote $p_n := \Proba{\bigcup_{i=1}^n \text{Cross}_{\mathcal{E}_0(f)}(\mathcal{R}_i)}.$
Applying Theorem 1.12 of \cite{RV19} or Theorem 2.14 of \cite{BMR20} (see also Corollary 1.1 of \cite{BMR20}).
We see that we have a constant $C>0$ (depending on $q$, $\sigma$ and $l_0$) such that
$$p_n \leq \Proba{\text{Cross}_{\mathcal{E}_0(f)}(\mathcal{R}_n)}p_{n-1}+C\tilde{\kappa}(l_n)(l_n+1)^2\sum_{i=1}^{n-1}(l_i+1)^2.$$
Observe that since $\sigma\geq 1$ we have $\Proba{\text{Cross}_{\mathcal{E}_0(f)}(R_n)}\leq \frac{1}{2}$. This simply comes from the fact that $f$ and $-f$ have the same law since $f$ is a centered Gaussian field and using the fact that the law of $f$ is invariant by rotation of $\pi/2$. (this also allows us to deduce that $\Proba{\text{Cross}_{\mathcal{Z}_0(f)}(R_n)}\leq \frac{1}{2}$).
Thus, we have a constant $C'>0$ (depending on $q$, $\sigma$ and $l_0$) such that,
$$p_n \leq \frac{1}{2}p_{n-1}+C'n\tilde{\kappa}(l_n)l_n^4.$$
By telescopage, with $p_0 = 1$, we have for $n\geq 0$
$$2^np_n \leq C'\sum_{k=1}^n k2^k\tilde{\kappa}(l_k)l_k^4 + 1.$$
This rewrites as
$$p_n \leq \frac{1}{2^n}\left(1+C'\sum_{k=1}^n k2^k\tilde{\kappa}(l_k)l_k^4\right),$$
which concludes the proof.
\end{proof}
Now, we are ready to prove that with high probability the straight runs of $E$ are sparse in $\lambda\mathcal{R}$. The proof of the next proposition is our main contribution towards the proof of Theorem \ref{thm:principal1} as we do a careful analysis of the proof of Aizenman and Burchard to show that we can replace the use of Assumption \ref{a:AB} by Lemma \ref{lemma:quasi_indep}.
\begin{proposition}
\label{prop:sparse}
There exists $\beta_0>4$ and there exists a renormalization triplet $(m,\gamma,s)$ such that the following holds.
Let $E$ denoted either $\mathcal{E}_0(f)$ or $\mathcal{Z}_0(f)$ and let $\mathcal{R}\subset \R^2$ be a non degenerated rectangle. If $q$ satisfies Assumption \ref{a:a1}, \ref{a:a2} for $\alpha\geq 3$ and \ref{a:a4} for some $\beta>\beta_0$, then for all $\delta>0$, there exists $k_0\geq 1$ such that for all $\lambda>1$ we have
\begin{equation}
    \Proba{\text{straight runs of }E \text{ are }(\gamma,k_0)\text{-sparse in }\lambda \mathcal{R} }\geq 1 - \delta.
\end{equation}
\end{proposition}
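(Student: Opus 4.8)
The plan is to estimate the probability of the bad event --- that straight runs of $E$ are \emph{not} $(\gamma,k_0)$-sparse in $\lambda\mathcal{R}$ --- by a union bound over all possible "witnessing" sequences of nested straight runs, and then control each term using the quasi-independence Lemma \ref{lemma:quasi_indep}. By definition, the bad event is the existence of some $n\geq 0$ and scales $1\leq k_1<\dots<k_n\leq k_{\max}$ with $n\geq \tfrac12\max(k_n,k_0)$ such that $\lambda\mathcal{R}$ contains a nested sequence of straight runs of $E$ at scales $L_{k_1}>\dots>L_{k_n}$. A nested sequence of straight runs is in particular a collection of crossings of rectangles that are contained in one another, and here is where the Aizenman--Burchard combinatorial argument must be revisited: from a long nested sequence of straight runs one can extract a \emph{well-separated} sub-collection whose size is a definite fraction of the original length. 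Concretely, since consecutive scales satisfy $L_{k_{j+1}}\leq L_{k_j}/\gamma$ and a straight run at scale $L$ lives in a rectangle of dimensions $L\times \tfrac{9L}{\sqrt\gamma}$, keeping (say) every few indices produces rectangles each of which is at distance at least its own diameter from the union of the (smaller) ones nested strictly inside it; one then checks that the resulting sub-collection of rectangles, reindexed by increasing size, has increasing heights $l_i$ bounded below by a fixed $l_0\sim \varepsilon L_{k_{\max}}\geq 1$ and a common aspect ratio $\sigma = 9/\sqrt\gamma$ (or its inverse), so Lemma \ref{lemma:quasi_indep} applies to it.

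The next step is to discretize. Straight-run rectangles can be rotated freely, so to union-bound we approximate each by a rectangle from a fixed countable net: at scale $L_k$ there are at most $O(\lambda^2 L_k^{-2})$ choices of center on a $\tfrac12\varepsilon L_k$-grid inside $\lambda\mathcal{R}$ and $O(L_k)$ choices of orientation on a fine enough angular net, and a genuine straight run at scale $L_k$ forces a crossing of a slightly fattened net-rectangle of comparable dimensions. Thus the number of candidate nested sequences indexed by $(k_1,\dots,k_n)$ is at most $\prod_{j=1}^n O\!\big(\lambda^3 L_{k_j}^{-1}\big)= \prod_{j=1}^n O(\lambda^3 \gamma^{k_j})$, while the probability that such a fixed sequence of rectangles is simultaneously crossed by $E$ is, by the extraction above together with Lemma \ref{lemma:quasi_indep}, at most $2^{-cn}\big(1+C\sum_k l_k^4\tilde\kappa(l_k)k2^k\big)$ for a suitable fraction $c>0$; using Assumption \ref{a:a4} with $\beta>\beta_0$ one has $\tilde\kappa(l)=O(l^{-\beta})$, so the parenthetical correction term is a bounded constant $C'$ as soon as $\beta_0$ is large enough (one needs $l_k^{4-\beta}k2^k$ summable, and the $l_k$ here are the heights of the extracted rectangles, which grow at least like $\gamma^{k_{\max}-k_j}$, making the tail geometrically small --- this is precisely where $\beta_0>4$, in fact somewhat larger, enters).

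Putting these together, the probability of the bad event is at most
\begin{equation}
\sum_{n\geq 0}\ \sum_{\substack{1\leq k_1<\dots<k_n\leq k_{\max}\\ n\geq \frac12\max(k_n,k_0)}}\ C'\,2^{-cn}\prod_{j=1}^n O(\lambda^3\gamma^{k_j}).
\end{equation}
The point of the renormalization triplet is that $\gamma^s<\sqrt{m(m+1)}$ is chosen so that the geometric gain $2^{-cn}$ --- more honestly, a gain of the form $\beta^{-n}$ coming from the same mechanism as in Proposition \ref{prop:low_energy} --- beats the enumeration cost $\prod \gamma^{k_j}$: since $\sum_j k_j$ is controlled by $k_n^2$ and $n\geq \tfrac12 k_n$, the product over $j$ is at most $(\lambda^3)^n \gamma^{O(k_n^2)}$, and one checks that the $\lambda$-dependence cancels against the $\lambda$-dependence hidden in $k_{\max}\sim \log\lambda$ and the range of scales, so that the whole sum is bounded by $C''\gamma^{-c' k_0^2}$ (or at least $C''\,\theta^{k_0}$ for some $\theta<1$) uniformly in $\lambda>1$. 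Choosing $k_0$ large enough makes this $<\delta$. The main obstacle is the second step: verifying that the extraction of a well-separated sub-collection from a nested sequence of straight runs genuinely loses only a constant factor in length \emph{and} that the extracted rectangles meet the hypotheses of Lemma \ref{lemma:quasi_indep} (increasing heights, fixed aspect ratio, heights bounded below), since unlike the independent setting of \cite{AB98} we cannot afford to be wasteful --- the correction term in \eqref{eq:indep_el} must stay summable, which constrains how the extracted scales are allowed to cluster and is the reason $\beta_0$ must be taken noticeably larger than $4$.
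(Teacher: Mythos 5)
Your overall architecture (union bound over witnessing sequences of scales, quasi-independence via Lemma \ref{lemma:quasi_indep}, geometric decay in $k_0$) matches the paper's, but there are two genuine gaps, and the first is fatal as written. Your extraction step produces a well-separated sub-collection whose size is only a \emph{constant fraction of $n$}: you thin the nested chain itself, so Lemma \ref{lemma:quasi_indep} gives a gain of at most $2^{-cn}$ with $c\leq 1$. This can never beat the entropy of the union bound, since the number of discretized positions at each scale is of order $\gamma^{2k_j}$ and $\gamma>m\geq 1$ must eventually be taken large (indeed the mechanism forces $\gamma$ large). The paper's gain does not come from the chain at all but from the \emph{shape} of a straight run: the rectangle of length $L$ and height $9L/\sqrt{\gamma}$, crossed lengthwise, contains about $\sqrt{\gamma}/40$ disjoint sub-rectangles of fixed aspect ratio each crossed lengthwise, from which one extracts $M=\frac{\sqrt{\gamma}}{80}-2$ well-separated ones \emph{per scale}. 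This yields $N\geq nM$ well-separated crossed rectangles and hence a gain $2^{-n(\sqrt{\gamma}/80-2)}$, which beats the enumeration cost $\gamma^{2(n+k_n)}\leq \gamma^{6n}$ once $\sqrt{\gamma}/160$ exceeds $\ln(2)+4\ln(\gamma)+1$. Without this amplification by $\sqrt{\gamma}$ the inequality you need simply does not hold.

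The second gap is in your enumeration. Counting the position of each rectangle independently inside $\lambda\mathcal{R}$ gives $\prod_j O(\gamma^{2k_j})=\gamma^{2\sum_j k_j}$, which can be as large as $\gamma^{k_n^2}$; no gain of the form $\theta^{N}$ with $N=O(n\sqrt{\gamma})=O(k_n\sqrt{\gamma})$ can absorb a $\gamma^{k_n^2}$ term uniformly in $k_n$, and the $\lambda$-cancellation you invoke does not occur (correctly done, there is no residual $\lambda$-dependence at all, since $\lambda^2/L_{k_1}^2=\gamma^{2k_1}$). The paper avoids this by enumerating each nested rectangle \emph{relative to its parent}, so the counts telescope: $C_{\mathcal{R}}\gamma^{2(k_1+1)}\cdot\gamma^{2(k_2-k_1)+2}\cdots\gamma^{2(k_n-k_{n-1})+2}=C_{\mathcal{R}}\gamma^{2(n+k_n)}$, which is then controlled by $2^{-nM}$ using $k_n\leq 2n$. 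Your treatment of the correction term in \eqref{eq:indep_el} (geometric growth of the extracted heights forces $\beta_0$ to grow with $\gamma$, hence noticeably larger than $4$) is in the right spirit and matches the paper's condition $2/\gamma^{r/M}\leq 1/2$, but it only becomes meaningful once the two points above are repaired.
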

\begin{proof}
We do the proof for $E=\mathcal{E}_0(f)$ as the proof for $\mathcal{Z}_0(f)$ is the same.
Let $(m,\gamma,s)$ be a renormalization triplet that is free for now and let $\lambda>1$ be a free parameter.
Fix $\mathbf{k}=(k_i)_{1\leq i \leq n}$ a sequence such that $1\leq k_1<\dots<k_n\leq k_{\text{max}}$ with $n\geq \frac{1}{2}\max(k_n,k_0)$. Denote $\mathcal{A}_{\mathbf{k}}$ the event that in $\lambda \mathcal{R}$ there exists a sequence of nested straight runs of $E$ at scales $L_{k_1}>L_{k_2}>\dots >L_{k_n}.$ On the event $\mathcal{A}_{\mathbf{k}}$ one can find nested rectangles $\mathcal{R}_1,\dots, \mathcal{R}_n$ such that $\mathcal{R}_i$ is of length $L_{k_i}$, of height $\frac{9L_{k_i}}{\sqrt{\gamma}}$ and such that some connected component of $E\cap \mathcal{R}_n$ joins the two centers of the sides of $\mathcal{R}_i$.

For $j\geq 1$, denote by $\mathbb{L}_j$ the lattice $\mathbb{L}_j := \frac{L_{k_j}}{\gamma}\mathbb{Z}^2$. Since $E$ crosses the rectangle $\mathcal{R}_1$, it also crosses a rectangle $\mathcal{R}'_1$ of length $\frac{L_{k_1}}{2}$ and height $\frac{10L_{k_1}}{\sqrt{\gamma}}$ centered on a line segment joining two points of $\mathbb{L}_1\cap \lambda\mathcal{R}$, the number of possibilities of such a rectangle $\mathcal{R}'_i$ is at most $C_\mathcal{R}\left(\frac{\lambda}{L_{k_1}/\gamma}\right)^2 = C_\mathcal{R}\gamma^{2(k_1+1)},$ where $C_\mathcal{R}>0$ is a constant depending on $\mathcal{R}.$ Similarly, since $E$ crosses $\mathcal{R}_2\subset \mathcal{R}_1$ in the length direction, one can find a rectangle $\mathcal{R}'_2$ of length $\frac{L_{k_2}}{2}$ and of height $\frac{10L_{k_2}}{\sqrt{\gamma}}$ centered on a segment joining two points in $\mathbb{L}_2 \cap \mathcal{R}_1$. The number of possibilities for such a rectangle is at most $\left(\frac{L_{k_1}}{L_{k_2}/\gamma}\right)^2=\gamma^{2(k_2-k_1)+2}.$ We can repeat this procedure to build rectangles $\mathcal{R}'_i$ for $1\leq i \leq n$. The total number of possibilities for the position of the $(\mathcal{R}'_i)_{1\leq i \leq n}$ is at most
$$C_\mathcal{R}\gamma^{2k_1+2}\gamma^{2(k_2-k_1)+2}\dots \gamma^{2(k_n-k_{n-1})+2} = C_\mathcal{R}\gamma^{2(n+k_n)}.$$
Moreover, the crossing of the rectangle $\mathcal{R}_i'$ in the length direction implie that there are least $\frac{\sqrt{\gamma}}{40}$ disjoint smaller rectangles of height $\frac{10L_{k_i}}{\sqrt{\gamma}}$ and of length $\frac{20L_{k_i}}{\sqrt{\gamma}}$ which are all crossed in the length direction. Among the collection of those smaller rectangles (for all $i$) one can extract a family of well-separated rectangles such that for all $i$ there are at least $M:=\frac{\sqrt{\gamma}}{80}-2$ rectangles of dimensions $\frac{20L_{k_i}}{\sqrt{\gamma}} \times \frac{10L_{k_i}}{\sqrt{\gamma}}$ in this collection. This is done as follows: at each scale $L_{k_i}$ take one of the smaller rectangle over two and remove the two such rectangles closest to the rectangle $\mathcal{R}'_{i+1}$. In the following, denote by $(R_j)_{1\leq j \leq N}$ this collection of well-separated rectangles where $N\geq nM = n\left(\frac{\sqrt{\gamma}}{80}-2\right)$. By the previous construction one may assume that $R_j$ is of length $2l_j$ and of height $l_j$ for some $l_j \geq 0$. Moreover, we can assume that the sequence $l_j$ is increasing and that we have $$l_j \geq \frac{10\gamma^{\frac{j}{M}-1}}{\sqrt{\gamma}},$$
where we recall that $M=\frac{\sqrt{\gamma}}{80}-2.$ We remark that we have chosen to work rectangle of size $2l_j \times l_j$, but the same argument would also work if one were to chose squares of size $l_j \times l_j$, the reason we choose to work with those rectangle $R_i$ is simply for convenience so that we avoid having to specify the direction in which the crossing occurs in the squares.

Note that since $q(x) = O(\norm{x}{}^{-\beta})$ this implies $\kappa(x)=O(\norm{x}{}^{-\beta})$ and then $\tilde{\kappa}(x) = O(x^{-\beta})$ We write $\beta = 4+r$ with some $r>0$.
Doing an union bound on all possible positions for the rectangles $(R_i)_{1\leq i \leq n}$ and applying Lemma \ref{lemma:quasi_indep} and using the fact that $\tilde{\kappa}(x)=O(x^{-4-r})$ we have a constant $C_\gamma>0$ (depending on $q$ and $\gamma$) such that
\begin{align*}
    \Proba{\mathcal{A}_{\mathbf{k}}} &\leq C_{\mathcal{R}}\frac{\gamma^{2(n+k_n)}}{2^N}\left(1+C_\gamma\sum_{k=1}^N \frac{k2^k}{l_k^r}\right)\\
    &\leq C_\mathcal{R}\frac{\gamma^{2(n+k_n)}}{2^N}\left(1+C_{r,\gamma}\sum_{k=1}^N k\left(\frac{2}{\gamma^{r/M}}\right)^k\right) \\
    &\leq \frac{C_{\mathcal{R},r,\gamma}\gamma^{2(n+k_n)}}{2^N}\left(1+N\left(\frac{2}{\gamma^{r/M}}\right)^N\right),
\end{align*}
Here $C_{\mathcal{R},r,\gamma}$ is a positive constant that depends only on $\mathcal{R}$ and the values of $\gamma$ and $r$. This constant may change from line to line. However note that this constant does not depend on $\lambda$. In the following, depending on the value of $\gamma$ (to be determined) later, we choose $r=r(\gamma)$ big enough so that
\begin{equation}
    \label{eq:rgamma}
    \frac{2}{\gamma^{r/M}}\leq \frac{1}{2}.
\end{equation}
Note that this is possible as soon as $r\geq \frac{\ln(4)M}{\ln(\gamma)}$. Since we have $M=\frac{\sqrt{\gamma}}{80}-2$ one may note that $r(\gamma)$ goes to infinity as $\gamma$ goes to infinity.
Under our condition \eqref{eq:rgamma}, we have
\begin{equation}
    \label{eq:Ak1}
    \Proba{\mathcal{A}_{\mathbf{k}}} \leq \frac{C_{\mathcal{R},r,\gamma}\gamma^{2(n+k_n)}}{2^N}\left(1+\frac{N}{2^N}\right).
\end{equation}
We can slightly modify the constant $C_{\mathcal{R},r,\gamma}$ in \eqref{eq:Ak1} and recall that we have $N\geq n\left(\frac{\sqrt{\gamma}}{80}-2\right)$ to get
\begin{equation}
    \label{eq:Ak2}
    \Proba{\mathcal{A}_{\mathbf{k}}} \leq \frac{C_{r,\gamma,\mathcal{R}}\gamma^{2(n+k_n)}}{2^{n\left(\frac{\sqrt{\gamma}}{80}-2\right)}}.
\end{equation}

Now for $k\geq 1$, consider the event $\mathcal{B}_k$ that there exists a nested straight run in $\lambda \mathcal{R}$ for some sequence $\mathbf{k}=(k_i)_{1\leq i \leq n}$ with $1\leq k_1<k_2<\dots <k_n <k_{\text{max}}$ and with the additional condition that $k=k_n\geq n\geq \frac{k}{2}.$  By an union bound we have
\begin{align*}
    \Proba{\mathcal{B}_k}\leq \sum_{n=\frac{k}{2}}^k \sum_{\mathbf{k} | k_n = k}\Proba{\mathcal{A}_{\mathbf{k}}}.
\end{align*}
where the second sum is over the sequences $\mathbf{k}=(k_i)_{1\leq i \leq n}$ with $1\leq k_1<k_2<\dots <k_n$ with $k_n=k\geq n \geq \frac{k}{2}.$ Note that for fixed $k$ there are at most $\binom{k}{n}$ such sequences of length $n$. Using our estimate \eqref{eq:Ak2} on the probability of $\mathcal{A}_k$ one get that under condition \eqref{eq:rgamma} we have
\begin{equation}
    \label{eq:Bk1}
    \Proba{\mathcal{B}_k} \leq \sum_{n=\frac{k}{2}}^k \binom{k}{n} \frac{C_{\mathcal{R},r,\gamma}\gamma^{2(n+k)}}{2^{n\left(\frac{\sqrt{\gamma}}{80}-2\right)}}.
\end{equation}
We deduce from \eqref{eq:Bk1} that we have
\begin{equation}
    \label{eq:Bk2}
    \Proba{\mathcal{B}_k} \leq C_{\mathcal{R},r,\gamma}2^k\gamma^{4k}2^{-k\left(\frac{\sqrt{\gamma}}{160}-1\right)}=C_{\mathcal{R},r,\gamma}e^{k\left(\ln(2)+4\ln(\gamma)+1-\frac{\sqrt{\gamma}}{160}\right)}.
\end{equation}
Now we choose a renormalization triplet $(m,\gamma,s)$ with $m\in \mathbb{N}$ big enough so that the value of $\gamma>m$ satisfies
\begin{equation}
    \label{eq:cond_gamma}
    \ln(2)+4\ln(\gamma)+1-\frac{\sqrt{\gamma}}{160}<0.
\end{equation}
For this specific renormalization triplet, we choose $r$ big enough so that \eqref{eq:rgamma} is satisfied (this defines the value of $\beta_0>4$). Under these conditions, we denote
\begin{equation}
    \rho:= e^{\ln(2)+4\ln(\gamma)+1-\frac{\sqrt{\gamma}}{160}}.
\end{equation}
We have $0<\rho<1$ and we have a constant $C_{\mathcal{R},r,\gamma}$ such that:
\begin{equation}
    \Proba{\mathcal{B}_k}\leq C_{\mathcal{R},r,\gamma}\rho^k.
\end{equation}
Finally, observe that if straight runs of $E$ are not $(\gamma,k_0)$-sparse in $\lambda \mathcal{R}$ then one of the event $\mathcal{B}_k$ must occur for some $k\geq k_0$. Hence by an union bound we have
\begin{equation}
    \Proba{\{\text{straight runs of }E\text{ are not }(\gamma,k_0)\text{-sparse in }\lambda \mathcal{R}\}}\leq C_{\mathcal{R},r,\gamma}\frac{\rho^{k_0}}{1-\rho}.
\end{equation}
This probability can be made smaller than $\delta>0$ as soon as $k_0$ is big enough (note that there are not dependence on $\lambda$ in neither $\rho$ nor $C_{\mathcal{R},r,\gamma}$). This concludes the proof.
\end{proof}
We now state the proof of Theorem \ref{thm:principal1}.
\begin{proof}[Proof of Theorem \ref{thm:principal1}]
Let $\beta_0>4$ and $(m,\gamma,s)$ the renormalization triplet given by Proposition \ref{prop:sparse} and be fixed for the rest of the proof. In the following, we assume that $q$ satisfies Assumption \ref{a:a4} for $\beta>\beta_0$. Let $E$ denotes either $\mathcal{E}_0(f)$ or $\mathcal{Z}_0(f)$ and $\mathcal{R}\subset \R^2$ be a fixed rectangle. Let $\delta>0$, by Proposition \ref{prop:sparse}, we can find $k_0\geq 1$ (depending only on $\delta$, $q$ and $\mathcal{R}$) such that for all $\lambda>1$ we have
\begin{equation}
    \Proba{\text{straight runs of }E\text{ are }(\gamma,k_0)\text{-sparse in }\lambda \mathcal{R}} \geq 1-\delta.
\end{equation}
On the event that the straight runs of $E$ are $(\gamma, k_0)$-sparse in $\lambda \mathcal{R}$, given any curve $\mathcal{C}\in \mathcal{C}(\mathcal{R},E,\lambda)$ (that is a continuous rectifiable curve in $\lambda \mathcal{R}\cap E$ of diameter at least $\lambda$, see Definition \ref{def:curves}) we can apply Proposition \ref{prop:low_energy} to find a probability measure $\mu$ supported on $\mathcal{C}$ such that its energy $E_s(\mu)$ satisfies
\begin{equation}
    E_s(\mu)\leq \frac{1}{\varepsilon^s}\left(\gamma^{s(k_0+1)}+\frac{\beta}{1-\frac{\gamma^s}{\beta}}\right).
\end{equation}
Where we recall that $\varepsilon=\frac{\gamma}{m}-1$ and $\beta=\sqrt{m(m+1)}.$ Thus, applying Corollary \ref{cor:1}, we see that we have a constant $c>0$ depending on $k_0$ as well than on the renormalization triplet $(m,\gamma,s)$ such that
\begin{equation}
    \text{length}(\mathcal{C}) \geq c\lambda^s.
\end{equation}
This can be reformulated by saying that on the event that the straight runs of $E$ are $(\gamma, k_0)$-sparse in $\lambda \mathcal{R}$, we have
$$\forall \mathcal{C}\in \mathcal{C}(\mathcal{R},E,\lambda),\ \text{length}(\mathcal{C})\geq c\lambda^s.$$
This precisely yields the conclusion of Theorem \ref{thm:principal1}.
\end{proof}
We conclude this section with the proofs of Corollaries \ref{cor:1_1} and \ref{cor:1_2}.
\begin{proof}[Proof of Corollary \ref{cor:1_1}]
Without loss of generality we assume that the rectangle $\mathcal{R}$ is of unit length. We let $E$ denote either $\mathcal{E}_0(f)$ or $\mathcal{Z}_0(f)$. Let $\delta>0$ and take $C_1$ associated by Theorem \ref{thm:principal1}. Let $\lambda>1$, recall that $\mathcal{C}(\mathcal{R},E,\lambda)$ denotes the set of continuous rectifiable curves in $E\cap \lambda \mathcal{R}$ of diameter at least $\lambda$. In particular, any continuous and rectifiable curve that realizes the crossing of $\lambda\mathcal{R}$ in the length direction belongs to $\mathcal{C}(\mathcal{R},E,\lambda).$ If $\mathcal{G}$ denotes the event
$$\mathcal{G}:=\left\{\forall \mathcal{C}\in \mathcal{C}(\mathcal{R},E,\lambda),\ \text{length}(\mathcal{C})\geq C_1\lambda^{s_1}\right\},$$
then we have
$$\mathcal{G}\cap \text{Cross}_E(\lambda \mathcal{R})\subset \{S_E(\lambda\mathcal{R})\geq C_1\lambda^{s_1}\}\cap \text{Cross}_E(\lambda \mathcal{R}).$$
Thus,
\begin{equation}
    \label{eq:proof_cor1}
    \Proba{\{S_E(\lambda\mathcal{R})\geq C_1\lambda^{s_1}\}\cap \text{Cross}_E(\lambda \mathcal{R})}\geq \Proba{\text{Cross}_E(\lambda \mathcal{R})}-\delta.
\end{equation}
It remains to divide both sides of \eqref{eq:proof_cor1} by $\Proba{\text{Cross}_E(\lambda \mathcal{R})}$ that stays bounded away from $0$ as $\lambda$ varies by the first item of Theorem \ref{thm:rsw}.
\end{proof}
\begin{proof}[Proof of Corollary \ref{cor:1_2}]
The proof is exactly the same as the proof of Corollary \ref{cor:1_1} until we reach \eqref{eq:proof_cor1}. Then instead of applying Theorem \ref{thm:rsw} to show that $\Proba{\text{Cross}_{\mathcal{E}_0(f)}(\lambda \mathcal{R})}$ stays bounded away from $0$, we simply use the symmetry of the field ($f$ and $-f$ have the same law) together with the invariance of the law of $f$ by rotation of $\pi/2$ to get $\Proba{\text{Cross}_{\mathcal{E}_0(f)}(\lambda \mathcal{R})}=\frac{1}{2}.$
\end{proof}

\subsection{Proof of the upper bound}
We now turn to the proof of Theorem \ref{thm:principal2}. As mentioned in the introduction, the idea is to control the number of boxes of size $1$ visited by the curve realizing the shortest crossing. In order to do so we recall a result of \cite{BG16} about the \textit{one-arm event}.

\begin{definition}
Let $1\leq s<t$ be two real parameters and $E$ denotes either $\mathcal{E}_0(f)$ or $\mathcal{Z}_0(f)$. The \textit{one-arm event} denoted by $\text{Arm}_E^1(s,t)$ is the event that there exists a connected component of $E$ that intersects both $\left[-\frac{s}{2},\frac{s}{2}\right]^2$ and the boundary of $\left[-\frac{t}{2},\frac{t}{2}\right]^2$.
The probability of $\text{Arm}_E^1(s,t)$ is denoted by $\pi_E^1(s,t).$
\end{definition}
We have the following crucial result.
\begin{theorem}[Theorem 1.4 of \cite{BG16}, see also \cite{MV20} for $\beta>2$]
\label{thm:one_arm}
There exists a constant $\eta>0$, such that, if $q$ satisfies Assumptions \ref{a:a1}, \ref{a:a2} for some $\alpha\geq 3$, \ref{a:a3} (weak positivity) and \ref{a:a4} for some $\beta>2$ then we have a constant $C>0$ such that:
\begin{equation}
    \label{eq:onearmexponent}
    \forall 1\leq s\leq t,\  \pi_E^1(s,t) \leq C\left(\frac{s}{t}\right)^\eta,
\end{equation}
where $E$ denotes either $\mathcal{E}_0(f)$ or $\mathcal{Z}_0(f)$.
\end{theorem}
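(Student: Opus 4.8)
The plan is to prove the polynomial decay by a multiscale renormalization argument, combining the box-crossing property of Theorem \ref{thm:rsw} with the quasi-independence of the field across well-separated regions. The starting point is a topological observation: the one-arm event can be destroyed by a single circuit in a suitable annulus. If $E=\mathcal{E}_0(f)$, a closed circuit contained in $\{f\leq 0\}$ that surrounds the inner box $[-s/2,s/2]^2$ forbids any $\{f\geq 0\}$-connected path from reaching the boundary of $[-t/2,t/2]^2$: by the Jordan curve theorem a path in $\{f\geq 0\}$ and a loop in $\{f\leq 0\}$ lie in disjoint sets and cannot cross. If $E=\mathcal{Z}_0(f)$, the same role is played by a circuit on which $f$ is strictly sign-definite, across which no nodal line can pass. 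In both cases the existence of such a \emph{blocking circuit} in an annulus is incompatible with $\text{Arm}_E^1(s,t)$.

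First I would fix a geometric sequence of scales $r_k:=s\rho^k$ for a ratio $\rho>1$, keeping those $k$ with $r_k\leq t$, so the number of admissible scales is of order $\log(t/s)$. In each annulus $A_k:=\{r_k\leq\norm{x}{\infty}\leq r_{k+1}\}$ I would consider the event $D_k$ that $A_k$ contains no blocking circuit. Covering $A_k$ by four overlapping rectangles of bounded aspect ratio in a pinwheel configuration and requiring each to be crossed in the appropriate direction by the dual set, the FKG inequality (which holds under Assumption \ref{a:a3}) together with Theorem \ref{thm:rsw} shows that the complementary event has probability at least some $p>0$ that is \emph{uniform in the scale} $k$. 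Hence $\Prob(D_k)\leq 1-p<1$ for every $k$, and, since the one-arm event forces every annulus to be crossed, $\text{Arm}_E^1(s,t)\subseteq\bigcap_k D_k$.

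The decisive step is to decouple the $D_k$ across scales. Were the field independent in disjoint regions one would immediately obtain $\Proba{\bigcap_k D_k}\leq(1-p)^{K}$ with $K\asymp\log(t/s)/\log\rho$, which is exactly a bound of the form $C(s/t)^\eta$ with $\eta=\log\frac{1}{1-p}/\log\rho$. To replace independence I would invoke the quasi-independence estimates for sign-definite crossings valid under Assumption \ref{a:a4}, in the spirit of Lemma \ref{lemma:quasi_indep}, applied after thinning the annuli so that the blocking events live in well-separated regions; a telescoping argument then gives $\Proba{\bigcap_k D_k}\leq(1-p)^K\bigl(1+\text{error}\bigr)$, where the error is a sum over scales involving $\tilde\kappa$ evaluated at the successive $r_k$. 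The main obstacle is precisely this error control: one must ensure that the accumulated decorrelation cost remains bounded (or at worst polynomially small) across the $\asymp\log(t/s)$ scales, and it is here that the decay hypothesis $\beta>2$ enters, guaranteeing that the covariance tails are summable against the geometric growth of the number of candidate circuit positions. Once this is secured, optimizing over $\rho$ and $p$ produces the exponent $\eta>0$ and the constant $C$ claimed in \eqref{eq:onearmexponent}.
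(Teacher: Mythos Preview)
The paper does not give its own proof of Theorem~\ref{thm:one_arm}; the result is quoted from \cite{BG16} (with the extension to $\beta>2$ attributed to \cite{MV20}) and then used as a black box in the proof of Theorem~\ref{thm:principal2}. So there is no in-paper argument to compare against. Your sketch is, in outline, precisely the argument of those references: RSW together with FKG yields a scale-uniform lower bound on the probability of a blocking circuit in each annulus of a geometric sequence, the one-arm event forces every such circuit to fail, and quasi-independence across well-separated annuli converts the product of the marginal bounds into a polynomial decay in $t/s$, with the hypothesis $\beta>2$ ensuring summability of the decorrelation errors.

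Two points in your write-up need tightening. First, the sentence ``a path in $\{f\geq 0\}$ and a loop in $\{f\leq 0\}$ lie in disjoint sets and cannot cross'' is false as written, since $\{f\ge 0\}\cap\{f\le 0\}=\{f=0\}$; an arm in $\mathcal{E}_0(f)$ can touch a $\{f\le 0\}$-circuit along the nodal set. The correct blocking object is a circuit in the open set $\{f<0\}$. What rescues the argument is that under Assumption~\ref{a:a2} with $\alpha\ge 3$, almost surely $0$ is a regular value of $f$, so $\{f=0\}$ is a smooth one-dimensional submanifold and the connected components of $\{f\le 0\}$ and of $\{f<0\}$ are in bijection; hence a $\{f\le 0\}$-circuit produced by RSW yields a $\{f<0\}$-circuit. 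Second, Lemma~\ref{lemma:quasi_indep} as stated in the paper applies to crossing events of a specific family of rectangles and cannot be invoked verbatim for the events $D_k$, which are measurable with respect to the field in annuli; you should appeal directly to the general quasi-independence statements of \cite{RV19} or \cite{BMR20} for events supported on disjoint regions.
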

We also recall definitions and a result of a previous work in \cite{Ver23}.
\begin{definition}
\label{def:SAB}
Let $B\subset \R^2$ be a square box of with $1$. Let $A\subset \R^2$ be a subset. We assume that $B\cap A$ has finitely many connected components : $B\cap A = \bigsqcup_{r=1}^q C_r$ where the $C_r$ are connected. We make the following definitions.
\begin{enumerate}
    \item For $x,y\in C_r$, the \textit{chemical distance} between $x$ and $y$ is denoted by $d_\chem(x,y)$ and is defined as the minimal length of a continuous rectifiable path $\gamma$ joining $x$ and $y$ within $C_r$.
    \item The \textit{chemical diameter} of $C_r$ is denoted by $\text{diam}_\chem(C_r)$ and is defined as
    $$\text{diam}_\chem(C_r):= \sup\{d_\chem(x,y)\ |\ x,y\in C_r\}.$$
    \item We define the quantity $S(A,B)$ as
    $$S(A,B) := \sum_{r=1}^q \text{diam}_\chem(C_r).$$
\end{enumerate}
\end{definition}
\begin{remark}
Typically, one should have in mind that the set $A$ is either $\mathcal{E}_\ell(f)$ or $\mathcal{Z}_\ell(f)$, in that case the quantity $S(A,B)$ as well as the notion of chemical distance and chemical diameter are random variables.
\end{remark}
We now state the result that allows us to control the chemical distance within a box.
\begin{proposition}[Proposition 3.7 in \cite{Ver23}]
\label{prop:moment}
Assume that $q$ satisfies Assumptions \ref{a:a1}, \ref{a:a2} for some $\alpha\geq 3$, \ref{a:a4} for some $\beta> 2$. Let $B\subset \R^2$ be a square box of side-length $1$. Then for any $k\leq \alpha-1$ there exists a constant $M_k\in \mathbb{R}_+$ such that
\begin{align}
    \mathbb{E}[S(\mathcal{E}_0(f),B)^k]\leq M_k, \\
    \mathbb{E}[S(\mathcal{Z}_0(f),B)^k]\leq M_k.
\end{align}
\end{proposition}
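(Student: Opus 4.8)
The plan is to reduce both moment bounds to a single estimate — a bound on the moments up to order $\alpha-1$ of the total nodal length $\mathcal{L}:=\mathcal{H}^1(\mathcal{Z}_0(f)\cap B)$ inside $B$ — and then to obtain that estimate from the Kac--Rice formula for the moments of the nodal measure. For the reduction, fix $B$ and observe that, since the finite-dimensional distributions of $f$ are non-degenerate and $f$ is almost surely $\mathcal{C}^1$, almost surely $0$ is a regular value of $f$ and of $f|_{\partial B}$ and $\mathcal{Z}_0(f)$ crosses $\partial B$ transversally; on this event $\mathcal{Z}_0(f)\cap B$ is a finite disjoint union of $\mathcal{C}^1$ arcs and loops, and $\mathcal{E}_0(f)\cap B$ has finitely many components, each a compact planar domain with rectifiable boundary. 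I would then prove the deterministic inequalities
\begin{equation*}
S(\mathcal{Z}_0(f),B)\ \le\ \mathcal{L},\qquad S(\mathcal{E}_0(f),B)\ \le\ c\,(\mathcal{L}+1)
\end{equation*}
for a universal constant $c$. The first is immediate, since the chemical diameter of an arc equals its length, that of a loop is half its length, and the component lengths sum to at most $\mathcal{L}$. For the second, for each component $C_r$ of $\mathcal{E}_0(f)\cap B$ one bounds $\diam_{\chem}(C_r)$ by its perimeter $\mathcal{H}^1(\partial C_r)$: a point of $C_r$ is joined to a nearest point of $\partial C_r$ by the straight segment between them, which lies in $C_r$ and has length at most the inradius of $C_r$, itself at most $\tfrac{1}{2\pi}\mathcal{H}^1(\partial C_r)$ by the isoperimetric inequality, while any two boundary points are joined along $\partial C_r$; this gives $\diam_{\chem}(C_r)\le c\,\mathcal{H}^1(\partial C_r)$ with \emph{no} additive constant per component. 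Summing over $r$ and using that each nodal arc inside $B$ bounds at most one such component, the rest of each $\partial C_r$ lying on $\partial B$, gives $\sum_r\mathcal{H}^1(\partial C_r)\le\mathcal{L}+\mathcal{H}^1(\partial B)=\mathcal{L}+4$. Hence it suffices to bound $\E{\mathcal{L}^k}$ for every $k\le\alpha-1$.

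To do that I would invoke the Kac--Rice formula for the moments of the nodal measure. Under Assumptions \ref{a:a1} and \ref{a:a2} with $\alpha\ge 3$ the field $f=q\ast W$ is almost surely of class $\mathcal{C}^{\alpha-1}$ (see the remark after Assumption \ref{a:a2}) and has non-degenerate finite-dimensional marginals, and in this regularity range the $k$-th moment of $\mathcal{L}$ equals
\begin{equation*}
\E{\mathcal{L}^k}=\int_{B^k}\rho_k(x_1,\dots,x_k)\,dx_1\cdots dx_k,
\end{equation*}
where $\rho_k$ is the $k$-point correlation function of the zero set, given by the Kac--Rice expression
\begin{equation*}
\rho_k(x_1,\dots,x_k)=p_{(f(x_1),\dots,f(x_k))}(0)\;\E{\prod_{i=1}^k\norm{\nabla f(x_i)}{}\ \Big|\ f(x_1)=\dots=f(x_k)=0},
\end{equation*}
with $p_{(f(x_1),\dots,f(x_k))}$ the density of the Gaussian vector $(f(x_1),\dots,f(x_k))$ evaluated at the origin of $\R^k$; the diagonal contributes nothing since the length measure on a curve is non-atomic. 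It then remains to check that $\rho_k\in L^1(B^k)$: away from the diagonal $\rho_k$ is bounded by a constant depending only on $q$ and $B$, while near the diagonal the density $p_{(f(x_1),\dots,f(x_k))}(0)$ blows up like the reciprocal of the square root of $\det\big(\kappa(x_i-x_j)\big)_{1\le i,j\le k}$ as the points collide; a Taylor expansion of $f$, legitimate up to order $\alpha-1\ge k$, shows that the conditional moment of $\prod_i\norm{\nabla f(x_i)}{}$ vanishes at the compensating rate, leaving an integrable singularity. This yields $\E{\mathcal{L}^k}<\infty$, and hence the proposition with $M_k$ a suitable increasing function of $\E{\mathcal{L}^k}$.

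The main obstacle is this last diagonal analysis: one must verify that the degeneracy of the Gaussian density is exactly offset by the vanishing of the conditional gradient moment, and quantifying this requires Taylor expanding $f$ to an order controlled by its smoothness — which is precisely what forces the restriction $k\le\alpha-1$. (Bounding $S$ instead via a Morse-theoretic count of the critical points of $f$ in $B$ would cost one more derivative and only reach $k\le\alpha-2$, which is why one routes through the nodal length.) A secondary but genuine ingredient is the deterministic geometric lemma above, bounding the chemical diameter of a component by the length of its boundary uniformly over the random — but almost surely tame — configurations; this is of the same nature as the regularity statements in \cite[Appendix A]{RV19}. Everything else, namely the reduction itself and the verification of the hypotheses of the Kac--Rice formula, is routine given the non-degeneracy built into the model.
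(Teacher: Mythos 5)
Your overall architecture coincides with the one behind the cited proof: reduce both bounds to the moments of the nodal length $\mathcal{L}=\mathcal{H}^1(\mathcal{Z}_0(f)\cap B)$ by a deterministic comparison, then bound $\E{\mathcal{L}^k}$ for $k\leq \alpha-1$. The paper indeed describes the argument of \cite{Ver23} as ``a deterministic argument allowing us to compare the chemical distance with the length of the nodal lines in a box together with a result of \cite{GS23}, \cite{AL23} concerning the finiteness of moments for the length of nodal lines.'' Your reduction step is essentially sound (one small point to repair: for a component $C_r$ of $\mathcal{E}_0(f)\cap B$ the boundary $\partial C_r$ need not be connected --- think of an annular component surrounding a negative island --- so ``join any two boundary points along $\partial C_r$'' is not literally available; the bound $\text{diam}_{\chem}(C_r)\leq c\,\mathcal{H}^1(\partial C_r)$ survives, but needs an argument that also crosses between boundary components through the interior).

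The genuine gap is in the second half. You reduce everything to $\E{\mathcal{L}^k}<\infty$ for all $k\leq\alpha-1$ and then propose to get this from the Kac--Rice formula, but the decisive step --- showing that the $k$-point correlation function $\rho_k$ of the zero set is integrable near the diagonal of $B^k$, i.e.\ that the blow-up of $p_{(f(x_1),\dots,f(x_k))}(0)$ as points collide is exactly compensated by the vanishing of the conditional moment of $\prod_i\norm{\nabla f(x_i)}{}$ --- is precisely the content of the recent works \cite{GS23} and \cite{AL23} that the paper invokes as a black box. For $k=2$ this is classical, but for general $k$ it was open for a long time and the known proofs (via divided differences and a delicate non-degeneracy analysis of the Gaussian vector obtained in the collision limit) occupy entire papers; your one-sentence appeal to ``a Taylor expansion of $f$ up to order $\alpha-1$'' names the right mechanism but does not constitute a proof, as you yourself acknowledge by calling it ``the main obstacle.'' So the proposal correctly identifies the route but leaves its hardest ingredient unproved; to be complete it should either cite the finiteness of nodal-length moments from \cite{GS23}, \cite{AL23} (as the paper does) or actually carry out the diagonal analysis.
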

We make a few comments about Proposition \ref{prop:moment}. The proof of this proposition relies on a deterministic argument allowing us to compare the chemical distance with the length of the nodal lines in a box together with a result of \cite{GS23}, \cite{AL23} concerning the finiteness of moments for the length of nodal lines. Although in \cite{Ver23}, Proposition \ref{prop:moment} was stated only for $\mathcal{E}_0(f)$ it is straightforward (and much easier) to adapt the proof for $\mathcal{Z}_0(f)$.

We now do the proof of the upper bound of Theorem \ref{thm:principal2}.

\begin{proof}[Proof of Theorem \ref{thm:principal2}]
Let $E$ denotes either $\mathcal{E}_0(f)$ or $\mathcal{Z}_0(f)$. We consider $\mathcal{R}\subset \R^2$ a fixed rectangle and $\lambda>1$ a real parameter. 
We prove that there exists $s_2<2$ such that with high probability on the event $\text{Cross}_E(\lambda \mathcal{R})$ we have $S_E(\lambda \mathcal{R}) \leq C\lambda ^{s_2}$ (recall Definition \ref{def:S_E} of $S_E$). Let $(B_j)_{1\leq j\leq N}$ with $N\leq C_1\lambda^2$ a covering of $\lambda \mathcal{R}$ by $N$ square boxes of size $1$ ($C_1>0$ is a constant depending only on $\mathcal{R}$). We write $B_j = b_j+\left[-\frac{1}{2},\frac{1}{2}\right]^2$ where $b_j \in \R^2$ is the center of $B_j$.
For $1\leq j \leq N$ denote $\mathcal{A}_j\subset \text{Cross}_E(\lambda \mathcal{R})$ the event that $B_j$ intersects some continuous rectifiable curve $\mathcal{C}$ realizing the crossing of $\lambda\mathcal{R}$ in $E$ (not necessarily the shortest one). We then define
\begin{equation}
    \mathcal{N} := \sum_{j=1}^N \mathds{1}(\mathcal{A}_j),
\end{equation}
the number of boxes $B_j$ that are visited by such curves on the event $\text{Cross}_E(\lambda \mathcal{R})$. Note that since a curve $\mathcal{C}$ that realizes the crossing of $\lambda\mathcal{R}$ is a curve of diameter at least $\lambda$, if a box $B_j$ is visited by such a curve then there is a path in $E$ joining $B_j$ to the boundary of $b_j+\left[-\frac{\lambda}{2},\frac{\lambda}{2}\right]$. Applying Theorem \ref{thm:one_arm} and using stationarity we see that we have a constant $C_2>0$ such that
\begin{equation}
    \forall \lambda>1, \Proba{\mathcal{A}_j} \leq \frac{C_2}{\lambda^\eta}.
\end{equation}
where $\eta>0$ is the fixed constant introduced in Theorem \ref{thm:one_arm}. In particular by linearity of expectation we have
\begin{equation}
    \label{eq:N}
    \mathbb{E}[\mathcal{N}]\leq N\frac{C_2}{\lambda^\eta}\leq C_1C_2\lambda^{2-\eta}=C_3\lambda^{2-\eta},
\end{equation}
where $C_3>0$ is a constant depending on $\mathcal{R}.$
Now we want to argue that in each box $B_j$ visited some curve realizing the crossing, one can control the time that an optimal curve should spend in this box. This is done using Proposition \ref{prop:moment}. Let $\nu := \frac{\eta}{2}$. With the notations of Definition \ref{def:SAB}, for $1\leq j \leq N$ we introduce the event $\mathcal{B}_j := \{S(E,B_j)\leq C_4\lambda^{\nu}\}$ where we recall that $E$ denotes either $\mathcal{E}_0(f)$ or $\mathcal{Z}_0(f)$ and where $C_4>0$ denotes a constant to be fixed later. Note that for all $k\leq m-1$ by Proposition \ref{prop:moment} together with the Markov inequality we have a constant $C_5^{(k)}>0$ (depending on $k$) such that
\begin{equation}
    \forall \lambda>1,\ \Proba{\mathcal{B}_j}\geq 1-\frac{C_5^{(k)}}{(C_4\lambda)^{\nu k}}.
\end{equation}
Now we choose $\alpha_0$ big enough so that $\nu(\alpha_0-1)\geq 2$, and we assume that $q$ verifies Assumption \ref{a:a2} for some $\alpha \geq \alpha_0$. Doing an union bound on all $1\leq j\leq N$ we see that
\begin{equation}
    \forall \lambda>1,\ \Proba{\bigcap_{i=1}^N \mathcal{B}_j} \geq 1-\frac{C_1C_5^{(\alpha_0-1)}}{C_4^{\nu(\alpha_0-1)}}.
\end{equation}
Let $\delta>0$, we now fix $C_4>0$ (depending on $\nu, \delta, C_5^{(\alpha_0-1)}$) big enough so that we have
\begin{equation}
    \forall \lambda>1,\ \Proba{\bigcap_{i=1}^N \mathcal{B}_j} > 1-\frac{\delta}{2}.
\end{equation}
Also using \eqref{eq:N} together with a Markov inequality we can find a constant $C_6>0$ (depending on $\delta$ and $\mathcal{R}$) such that
\begin{equation}
    \forall \lambda>1,\ \Proba{\mathcal{N}\leq C_6\lambda^{2-\eta}}> 1-\frac{\delta}{2}.
\end{equation}
Now on the event $\text{Cross}_E(\lambda \mathcal{R}) \cap \{\mathcal{N}\leq C_6\lambda^{2-\eta}\} \cap \bigcap_{j=1}^N B_j$, one can find a continuous rectifiable curve $\mathcal{C}\subset E$ crossing the rectangle $\lambda \mathcal{R}$ in the length direction and that is of Euclidean length less than $C_6\lambda^{2-\eta}C_4\lambda^\nu=C_7\lambda^{2-\nu}$ where $C_7>0$ is a constant (that depends on $\delta, \mathcal{R}$ and on $q$ but that does not depend on $\lambda$) and where we recall that $\nu=\frac{\eta}{2}>0$.
Hence we have proved the following
\begin{equation}
    \forall \lambda>1,\ \Proba{\text{Cross}_E(\lambda \mathcal{R}) \cap \{S_E(\lambda \mathcal{R})\leq C_7\lambda^{2-\nu}\}}\geq \Proba{\text{Cross}_E(\lambda \mathcal{R})}-\delta. 
\end{equation}
Now we can divide by $\Proba{\text{Cross}_E(\lambda \mathcal{R})}$ that is bounded away from $0$ as $\lambda$ varies by Theorem \ref{thm:rsw}. Adjusting constants we precisely get the conclusion of Theorem \ref{thm:principal2}.
\end{proof}
\bibliography{biblio.bib}
\end{document}